\setlist{leftmargin=*}
\setlist[itemize]{labelindent=\parindent}
\newtheorem{theorem}{Theorem}[section]
\newtheorem{lemma}[theorem]{Lemma}
\newtheorem{proposition}[theorem]{Proposition}
\newtheorem*{thmRead}{Theorem R}
\newtheorem*{thmGodInd}{Theorem GI}
\newtheorem{observations}[theorem]{Observations}
\newtheorem{corollary}[theorem]{Corollary}
\newtheorem{fact}[theorem]{Fact}
\theoremstyle{definition}
\newtheorem{definition}{Definition}
\newtheorem{remark}[theorem]{Remark}
\newtheorem{problem}[theorem]{Problem}
\newtheorem*{ack}{Acknowledgment}
\newtheorem{example}[theorem]{Example}
\def\en{\mathbb N}
\def\qe{\mathbb Q}
\def\er{\mathbb R}
\def\J{\mathcal J}
\newcommand{\diam}{\operatorname{diam}}
\newcommand{\supp}{\operatorname{supp}}
\newcommand{\Int}{\operatorname{Int}}
\newcommand{\conv}{\operatorname{conv}}
\newcommand{\Ker}{\operatorname{Ker}}
\newcommand{\att}{\operatorname{att}}
\newcommand{\sspan}{\operatorname{span}}
\newcommand{\dist}{\operatorname{dist}}
\newcommand{\NA}{\operatorname{NA}}
\newcommand\setsep{;\ }
\newcommand{\vertiii}[1]{{\left\vert\kern-0.25ex\left\vert\kern-0.25ex\left\vert #1 
    \right\vert\kern-0.25ex\right\vert\kern-0.25ex\right\vert}}
\def\cconv{\overline{\conv}}
\def\sconv{\conv_{\sigma}}
\newcounter{pfparcount}
\def\pfpar{\stepcounter{pfparcount}{\bf\S \arabic{pfparcount}: }}
\def\pfparref{\stepcounter{pfparcount}{\bf\S \arabic{pfparcount}}}
\def\pfparcite#1{{\bf\S #1}}
\begin{document}
\title[Norm-attaining functionals and proximinal subspaces]{Norm-attaining functionals and proximinal subspaces}
\author{Martin Rmoutil}

\thanks{The research leading to these results has received funding from the Grant GA\v{C}R P201/12/0290, and the European Research  Council under the European Union's Seventh Framework Programme (FP/2007-2013) / ERC Grant Agreement n.291497.}
\email[M.~Rmoutil]{martin@rmoutil.eu}
\address{University of Warwick, Mathematics Institute, Zeeman Building, Coventry CV4 7AL, United Kingdom}
\address{Charles University, Faculty of Mathematics and Physics, Department of\linebreak Mathematical Analysis, Sokolovsk\'a 83, 186 75 Prague 8, Czech Republic}

\date{\today}
\subjclass[2010]{Primary 46B10, 46B20; Secondary 46B03}
\keywords{proximinal subspace, norm-attaining functional, lineability, Banach space, non-reflexive space, equivalent norm}%
\begin{abstract}
G.~Godefroy asked whether, on any Banach space, the set of norm-attaining functionals contains a $2$-dimensional linear subspace. We prove that a recent construction due to C.J.~Read provides an example of a space which does not have this property.

This is done through a study of the relation between the following two sentences where $X$ is a Banach space and $Y$ is a closed subspace of finite codimension in $X$:
\begin{enumerate}[\bf (A)]
\item $Y$ is proximinal in $X$.
\item $Y^\bot$ consists of norm-attaining functionals.
\end{enumerate}
We prove that these are equivalent if $X$ is the Read's space.

Moreover, we prove that any non-reflexive Banach space $X$ with any given closed subspace $Y$ of finite codimension at least $2$ admits an equivalent norm such that implication {\bf (B)}~$\Longrightarrow$~{\bf (A)}  fails.
\end{abstract}

\maketitle

\section{Introduction}
Let $X=(X,\|\cdot\|)$ be a normed linear space. Then by $X^*$ we denote the topological dual of $X$ and by $\NA(X)=\NA(X,\|\cdot\|)$ the set of all functionals in $X^*$ for which there exists $x\in S_X$ such that $x^*(x)=\|x\|$; its elements are \emph{norm-attaining functionals}.

A set $F\subset X$ is said to be \emph{proximinal} if for each $x\in X$ there is $y\in F$ with $\| x-y \|=\dist (x,F)$. Note that proximinal sets are closed.

In this paper we investigate the relation of the following two sentences in which $X$ is a Banach space and $Y\subset X$ is a closed subspace:
\begin{enumerate}[\bf (A)]
\item $Y$ is proximinal in $X$.
\item $Y^\bot:=\{ x^*\in X^* \setsep Y\subset \Ker (x^*)\} \subset \NA(X)$.
\end{enumerate}

In view of the easy observation that a functional $x^*\in X^*$ is norm-attaining if and only if $\Ker(x^*)$ is proximinal (see e.g. Corollary~\ref{C:proxNA}), it is reasonable to ask whether sentences {\bf (A)} and {\bf (B)} are also equivalent when the codimension of $Y$ in $X$ is higher than $1$. And, indeed, implication $\text{\bf (A)}\Longrightarrow\text{\bf (B)}$ is true whenever the quotient space $X/Y$ is reflexive (cf. Lemma~\ref{L:proxchar}). In particular, it holds when the codimension of $Y$ in $X$ is finite (and therefore $\dim(X/Y)<\infty$) which is the case we are interested in the most.

However, implication $\text{\bf (B)}\Longrightarrow\text{\bf (A)}$ is not necessarily true even if $Y$ has finite codimension. The validity of this implication (and related statements) for various spaces has been studied by several authors (e.g. A.L.~Garkavi, V.~Indumathi or G.~Godefroy) and counterexamples are known: e.g. Indumathi \cite[Section 4.]{Ind82} shows that in any infinite-dimensional space $L_1(T,\nu)$ (here $(T,\nu)$ is a positive measure space) there exists a closed non-proximinal subspace $M$ of finite codimension $d\geq 2$ such that all subspaces $N\supsetneq M$ are proximinal; in particular, $L_1(T,\nu)$ fails implication $\text{\bf (B)}\Longrightarrow\text{\bf (A)}$.  In another work \cite{Ind87} of Indumathi, the author studies the relation of {\bf (A)} and {\bf (B)} (and similar statements) in more detail and defines a space to be $R(1)$ if implication $\text{\bf (B)}\Longrightarrow\text{\bf (A)}$ is true for any closed subspace $Y\subset X$ of finite codimension (in general, a non-reflexive space is said to be $R(n)$ \emph{space} if proximinality of its closed subspaces of finite codimension at least $n$ is implied by the proximinality of all $n$-codimensional subspaces containing it). 

In Section~5 we show that, given any non-reflexive Banach space $X$ and its closed subspace $Y$ of finite codimension at least $2$, there is an equivalent norm $\vertiii{\cdot}$ on $X$ such that $Y$ is not proximinal in $(X,\vertiii{\cdot})$, but $Y^\bot\subset \NA(X,\vertiii{\cdot})$ (see Theorem~\ref{C:counterex}). In particular, any non-reflexive Banach space can be renormed so that it is not $R(1)$.

The main motivation for the present work is the following Problem of G.~Godefroy together with a recent theorem of C.J.~Read below.
\begin{problem}[{\cite[Problem III.]{GodBSc0}}, also e.g. \cite{BanGod06} or \cite{AcAiArGa07}] \label{Problem1}
Does the set $\NA(X)$ contain a $2$-dimensional subspace for any Banach space $X$?
\end{problem}

The well-known theorem of E.~Bishop and R.R.~Phelps \cite{BiPh61} states that for any Banach space $X$, the set $\NA(X)$ is dense in $X^*$; in particular, $\NA(X)$ is nonempty and so it clearly contains $1$-dimensional subspaces. Since then, a number of papers studying the structure of $\NA(X)$ from various viewpoints have been written; the ones which are interesting for us are concerned with the linear structure of $\NA(X)$, in particular the question whether $\NA(X)$ is lineable or spaceable (for definition see Section~2). For example, in \cite{BanGod06} it is shown that if $X$ is an Asplund space enjoying the Dunford-Pettis property, then $\NA(X)$ is not spaceable (e.g. $C(K)$ with $K$ scattered (Hausdorff) compactum; however, $\NA(C(K))$ for $K$ infinite is always lineable---see \cite{AcAiArGa07}). On the other hand, in \cite{AcAiArGa07} the authors prove that if the Banach space $X$ possesses an infinite-dimensional complemented subspace with a Schauder basis, then $X$ can be equivalently renormed so that $\NA(X)$ is lineable. Some articles also investigate the lineability of $X^*\setminus\NA(X)$, e.g. \cite{AcAiArGa07} or \cite{Gar08}. In \cite{Gar08} it is observed that if $\J$ denotes the James space, then $\J^*\setminus \NA(\J)$ is not even $2$-lineable (i.e. $(\J^*\setminus \NA(\J))\cup \{0\}$ contains no 2-dimensional subspace). 

A related problem (again \cite[Problem III.]{GodBSc0}) was to decide whether every Banach space contains a proximinal subspace of codimension $2$. This problem was recently solved by C.J.~Read:

\begin{thmRead}[C.J.~Read \cite{Read}] \label{T:Read}
Let $\vertiii{\cdot}$ be the renorming of $c_0$ from Definition~\ref{defRead}. Then $(c_0,\vertiii{\cdot})$ contains no proximinal subspaces of finite codimension $2$ or larger.
\end{thmRead}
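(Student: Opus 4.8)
The plan is to use the general machinery of Section~1 to reduce the statement to a purely dual assertion about norm-attaining functionals, and then to extract a contradiction from the fine structure of the norm $\vertiii{\cdot}$ of Definition~\ref{defRead}. For the reduction, let $Y\subset(c_0,\vertiii{\cdot})$ be a closed subspace of finite codimension $d\geq2$ and suppose, for contradiction, that $Y$ is proximinal, i.e.\ that \textbf{(A)} holds. Since $X/Y$ is finite-dimensional, hence reflexive, Lemma~\ref{L:proxchar} gives the implication \textbf{(A)}$\Longrightarrow$\textbf{(B)}, so that $Y^\bot\subset\NA(c_0,\vertiii{\cdot})$. But $Y^\bot$ is a linear subspace of $X^*$ of dimension exactly $d\geq2$, so any two independent $f,g\in Y^\bot$ span a $2$-dimensional subspace contained in $\NA(c_0,\vertiii{\cdot})$. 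Hence it suffices to prove that $\NA(c_0,\vertiii{\cdot})$ contains no $2$-dimensional linear subspace. I stress that this uses only the easy implication \textbf{(A)}$\Longrightarrow$\textbf{(B)}, valid for every space; in particular the argument does not circularly invoke the equivalence \textbf{(A)}$\Longleftrightarrow$\textbf{(B)} that is special to Read's space.

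Next I would pass to the dual picture. Identifying $(c_0,\vertiii{\cdot})^*$ with $\ell_1$ under the dual norm $\vertiii{\cdot}^{*}$, a functional $f\in\ell_1$ belongs to $\NA(c_0,\vertiii{\cdot})$ exactly when the supremum defining $\vertiii{f}^{*}$ is attained at some $x\in S_X$. The real content of Definition~\ref{defRead} is that this attainment is extremely rigid, and the key step is to make this precise: working directly from the (recursive) definition of $\vertiii{\cdot}$, I would characterise which $f$ attain their norm and, for each such $f$, describe all admissible witnesses $x\in S_X$ with $f(x)=\vertiii{f}^{*}$---their supports, their signs, and the way they must interlock with the data defining the norm. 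The aim of this analysis is a rigidity statement asserting, roughly, that the combinatorial ``type'' of an admissible witness determines $f$ up to a scalar multiple.

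Granting such a characterisation, I would finish as follows. Given linearly independent $f,g\in\NA(c_0,\vertiii{\cdot})$, consider the functionals $f+tg$ for $t\in\er$; these are pairwise non-proportional. An admissible witness for $f+tg$ would have to reconcile the rigid structural requirements inherited from $f$ with those inherited from $g$, and the characterisation should show that this can happen for at most countably many values of $t$. Since $\er$ is uncountable, some $f+tg$ then attains its norm at no point of $S_X$, so $\sspan\{f,g\}\not\subset\NA(c_0,\vertiii{\cdot})$, contradicting the reduction and proving the theorem. The main obstacle is unquestionably the middle step: the explicit, norm-specific analysis of which functionals attain and of the shape of their witnesses. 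This is precisely where the ingenuity of Read's construction is used, and where the bulk of the technical work---tracking the recursive/tree structure underlying $\vertiii{\cdot}$ and showing that witness data cannot be shared between two independent functionals---resides.
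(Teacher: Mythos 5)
There is a genuine gap, and it is worth being precise about where it sits relative to the paper's structure. First, a point of logic: Theorem~R is not proved in this paper at all; it is quoted from Read's article \cite{Read}, and the paper's contribution runs in exactly the \emph{opposite} direction to your reduction. The paper takes Theorem~R as input and deduces (Theorem~\ref{T:ReadWorks}) that $\NA(c_0,\vertiii{\cdot})$ contains no $2$-dimensional subspace, the bridge being the hard fact that \textbf{(B)}$\Longrightarrow$\textbf{(A)} holds in Read's space (proved by showing that $X/Y$ is strictly convex whenever $Y^\bot\subset\NA(X)$, then invoking Lemma~\ref{L:proxchar}~\emph{(iv)}). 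Your first step --- use Lemma~\ref{L:proxchar}~\emph{(iii)} to get \textbf{(A)}$\Longrightarrow$\textbf{(B)} and conclude that a proximinal $Y$ of codimension $d\geq 2$ forces a $d$-dimensional subspace inside $\NA$ --- is correct and not circular, but it reduces Theorem~R to a statement that is \emph{a priori strictly stronger} (the non-$2$-lineability of $\NA$), namely the very theorem whose only proof in this paper depends on Theorem~R itself. So the reduction buys you nothing unless you can prove the stronger statement from scratch.

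That is where the proposal fails: the entire mathematical content is deferred to an unspecified ``rigidity statement'' about norm-attaining functionals and their witnesses, which is never formulated, let alone proved. Nothing in the proposal uses the actual structure of Definition~\ref{defRead} (incidentally, the norm is not recursive; it is an explicit perturbation $\|x\|_\infty+\sum_n 2^{-a_n^2}|\langle x,u_n-e_{a_n}\rangle|$ of the sup norm, and the arguments that do work --- both Read's and the paper's --- exploit that every element of $c_{00}(\qe)$ occurs as $u_k$ for infinitely many $k$, so that a separating functional can be matched against some term of the series, as in the Claim inside the proof of Theorem~\ref{T:ReadWorks}). The closing countability argument (``at most countably many bad values of $t$'') is likewise pure speculation with no mechanism behind it; one would need to exhibit a single $t$ with $f+tg\notin\NA$, and the proposal gives no way to find one. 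Finally, note that Read's own proof attacks proximinality directly (via a Garkavi-type characterization of proximinal finite-codimensional subspaces), not via $2$-lineability of $\NA$; as the proposal stands, neither Read's route nor the paper's route is actually carried out, so the theorem remains unproved.
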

In Section~4 we show that this space also works as a counterexample for Problem~\ref{Problem1}. More precisely, our Theorem~\ref{T:ReadWorks} states that $\NA(c_0,\vertiii{\cdot})$ contains no $2$-dimensional subspaces. This is done by showing that $(c_0,\vertiii{\cdot})$ is $R(1)$ (i.e. implication $\text{\bf (B)}\Longrightarrow\text{\bf (A)}$ holds for it) and then applying Read's result. 
Among the known sufficient conditions for a Banach space $X$ to be $R(1)$ are the following three: 
\begin{itemize}
\item the dual space $X^*$ is (G\^ateaux) smooth \cite[Corollary 2]{Ind87};
\item $X$ is WLUR (for definition see Section 2.) \cite[Corollary 3]{Ind87};
\item $X$ is a subspace of $c_0$ \cite{GodInd99}.
\end{itemize}
None of these seems to be suitable in the case of $(c_0,\vertiii{\cdot})$, however; our proof has to be more direct, although it is done similarly to our new proof of the sufficiency of $X$ being WLUR (see Corollary \ref{C:suffWLUR}).

The essential (but simple) tool we use throughout the paper is a characterization of conditions {\bf (A)} and {\bf (B)} in terms of the quotient space $X/Y$ (see parts \emph{(i)} and \emph{(ii)} of Lemma~\ref{L:proxchar}); for example, a closed subspace $Y\subset X$ is proximinal if and only if the quotient mapping $q:X\to X/Y$ maps $B_X$ onto the whole $B_{X/Y}$ (where $B_Z$ denotes the closed unit ball in $Z$), and we have a similar condition characterizing the fact that $Y^\bot\subset \NA(X)$. It is not difficult to see that our characterization of proximinality is equivalent to that of Garkavi \cite{Gar63} (see \cite[Theorem A]{Ind87}), which is not formulated using the quotient space. The characterization of Garkavi was used in many subsequent articles on the topic, but it seems that our approach using quotient spaces is, at least in some situations, more natural.

In Lemma~\ref{L:proxchar}~\emph{(iv)} we use our characterizations of {\bf (A)} and {\bf (B)} to prove that implication $\text{\bf (B)}\Longrightarrow\text{\bf (A)}$ holds whenever $X/Y$ is strictly convex. Moreover, in Proposition~\ref{P:propSC} we show that this is the case whenever the norm on $X$ is WLUR (thus obtaining a different proof of this known sufficient condition for $R(1)$). This is not completely trivial since in Example~\ref{E:SCquot} we have a strictly convex Banach space $X$ and its closed subspace $Y$ of codimension $2$ such that $X/Y$ is not strictly convex.

The proof of Theorem~\ref{T:ReadWorks} (i.e. that Read's space solves Problem~\ref{Problem1}) is done by contradiction: Setting $X=(c_0,\vertiii{\cdot})$ we assume that there is a closed subspace $Y\subset X$ of finite codimension at least $2$ such that $Y^\bot\subset\NA(X)$, and then we prove that $X/Y$ is strictly convex. It follows from Lemma~\ref{L:proxchar} that $Y$ is proximinal which is a contradiction with Theorem~R.

We also use quotient spaces to obtain our renorming of a non-reflexive space $X$ such that the resulting space contradicts implication $\text{\bf (B)}\Longrightarrow\text{\bf (A)}$. Indeed, in Lemma~\ref{T:Xgen} we take any closed subspace $Y\subset X$ of finite codimension in $X$. Given a set $A\subset X/Y$ such that $U_{X/Y}\subset A \subset  B_{X/Y}$ satisfying some technical conditions, we find a renorming of $X$ such that $q(B_X)=A$. Then, using our Lemma~\ref{L:proxchar} we show in Theorem~\ref{C:counterex} that if the codimension of $Y$ is at least $2$, then $A$ can be chosen in such a way that the resulting renorming satisfies {\bf (B)}, but not {\bf (A)}.

\section{Notation and Definitions}
Let $X=(X,\|\cdot\|)$ be a normed linear space over the field of real numbers. Then $0$ is the origin in $X$ (there is no risk of confusion), $X^*$ is its (topological) dual, $B_X$ its closed unit ball, $U_X$ is the open unit ball and $S_X$ denotes the unit sphere. For a functional $x^*\in X^*$ we use the notation $\Ker(x^*):=(x^*)^{-1}(0)$ and $\att (x^*):=\{ x\in B_X\setsep x^*(x)=\|x^*\| \}$. As explained in the Introduction, $\NA(X)=\{x^*\in X^*\setsep \att(x^*)\neq\emptyset\}$. If $Y$ is a normed linear space, by $X\cong Y$ we mean that $X$ is linearly isomorphic to $Y$, and $X\oplus_\infty Y$ is the direct sum of $X$ and $Y$ equipped with the maximum norm, i.e. for $(x,y)\in X \oplus_\infty Y$ we have $\|(x,y)\| = \max\{\|x\|, \|y\|\}$.

If $Y$ is a subspace of $X$, we write $Y^\bot$ for its annihilator $\{ x^*\in X^* \setsep Y\subset \Ker(x^*) \}$. Recall that the quotient space $X/Y$ consists of all cosets of the form $x+Y$ and is equipped with the norm $\|x+Y\|=\dist(0,x+Y)$. If $X$ is a Banach space and $Y$ is closed, then $X/Y$ is a Banach space and its dimension is equal to the codimension of $Y$ in $X$. We shall often work with the quotient mapping $q:X\to X/Y:x\mapsto x+Y$, which is a linear operator satisfying $\|q\|=1$. 

Next, if $A\subset X$ is any set, then $\Int(A)$, $\overline{A}$ and $\partial A$ stand for the interior,  closure and boundary of $A$ respectively. By $x_n\stackrel{w}{\to}x$ we mean that $x_n$ converges to $x$ weakly in $X$, and similarly for $w^*$. If we do not specify which topology we mean, it is usually the norm topology; e.g. $x_n\to x$ means $\lim_{n\to\infty} \|x_n - x\|=0$.

We recall that the norm $\|\cdot\|$ on $X$ is called \emph{strictly convex} (or \emph{rotund}; SC for short) if all the points of $S_X$ are extremal points of $B_X$, that is, if there are no nontrivial line segments contained in $S_X$. In that case $X$ is also called strictly convex.

The norm $\|\cdot\|$ is said to be \emph{weakly locally uniformly rotund} (WLUR for short) if $x_n\stackrel{w}{\rightarrow}x$ whenever $x\in X$ and $(x_n)_{n=1}^\infty \subset X$ are such that $\lim_{n\to\infty} \|x_n\|=\|x\|$ and $\lim_{n\to\infty}\|x+x_n\|= 2\|x\|$. Then the space $X$ is called WLUR. 

For any sequence $a\in\ell_\infty$ we denote $a=\big(a^j\big)_{j=1}^\infty=\big(a^1, a^2, \dots\big)$. 

Let $A$ be a bounded set in a Banach space. We denote by $\sconv(A)$ the $\sigma$-convex hull of $A$, that is
$$\sconv(A)=\Big\{ \sum_{i=1}^\infty \lambda_i x_i \setsep (\lambda_i)_{i=1}^\infty \subset [0,1],\; \sum_{i=1}^\infty \lambda_i =1 \text{ and }(x_i)_{i=1}^\infty \subset A \Big\}.$$

A subset $M$ of a Banach space is said to be:
\begin{itemize}
\item \emph{$n$-lineable} if $M\cup\{0\}$ contains an $n$-dimensional subspace;
\item \emph{lineable} if $M\cup\{0\}$ contains an infinite-dimensional subspace;
\item \emph{spaceable} if $M\cup\{0\}$ contains an infinite-dimensional closed subspace.
\end{itemize}
A comprehensive survey on linear subspaces of various sets is \cite{BePeSe14}.

\section{Observations and lemmas}
The following observations are obvious and will be applied without reference.	
\begin{observations}\label{O:fakta} Let $X$ be a normed linear space.
\begin{enumerate}[(i)]
\item Any proximinal set in $X$ is closed.
\item Let $F$ be a proximinal set in $X$ and let $x\in X$. Then $x+F$ is also proximinal in $X$.
\item A subspace $Y\subset X$ is proximinal if and only if for any $x\in X$ there is a point $z$ with minimal norm in the coset $x+Y$.
\end{enumerate}
\end{observations}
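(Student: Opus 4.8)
All three assertions are unwindings of the definition of proximinality, namely that for each $x$ there exists $y\in F$ attaining $\|x-y\|=\dist(x,F)$; the plan is simply to write out this defining equation in each case. For (i) I would take any $x\in\overline{F}$, so that $\dist(x,F)=0$, and apply proximinality to produce $y\in F$ with $\|x-y\|=\dist(x,F)=0$; this forces $x=y\in F$, whence $\overline{F}\subset F$ and $F$ is closed.

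For (ii) the one thing worth recording is that the translation $w\mapsto w-x$ is an isometric bijection of $X$ taking $x+F$ onto $F$, so that $\dist(z,x+F)=\dist(z-x,F)$ for every $z$. Given $z\in X$, I would apply proximinality of $F$ to the point $z-x$ to obtain $y\in F$ with $\|(z-x)-y\|=\dist(z-x,F)$ and then set $w:=x+y\in x+F$; the chain $\|z-w\|=\|(z-x)-y\|=\dist(z-x,F)=\dist(z,x+F)$ shows that $w$ is a nearest point of $x+F$ to $z$.

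For (iii) I would use that $Y$ is a subspace, so $-Y=Y$ and hence for every $x$ the coset $x+Y$ equals $x-Y$ and the minimal norm attainable on it is $\dist(0,x+Y)=\inf_{y\in Y}\|x-y\|=\dist(x,Y)$, which is exactly the quotient norm $\|x+Y\|$. If $Y$ is proximinal, a nearest point $y_0\in Y$ to $x$ gives $z:=x-y_0\in x+Y$ with $\|z\|=\dist(x,Y)$, a minimal-norm element of the coset; conversely, a minimal-norm element $z=x+y'$ of $x+Y$ gives $-y'\in Y$ with $\|x-(-y')\|=\|z\|=\dist(x,Y)$, a nearest point of $Y$ to $x$, so $Y$ is proximinal.

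Every step is a direct translation of the definitions, so I expect no real obstacle; the only points meriting a second glance are the attainment facts in (ii) and (iii), which become immediate once the isometry $w\mapsto w-x$ and the subspace identity $-Y=Y$ are in hand.
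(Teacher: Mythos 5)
Your proof is correct, and since the paper states these observations as obvious facts to be used without reference (no proof is given), your argument is precisely the routine verification the author had in mind: the closure argument via $\dist(x,F)=0$ for (i), the isometric translation for (ii), and the identification $\dist(x,Y)=\dist(0,x+Y)$ via $-Y=Y$ for (iii). Nothing is missing.
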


The next facts are well-known; we include them for the sake of completeness.
\begin{fact} \label{F:attsspan}
\mbox{}
\begin{enumerate}[(i)]
\item Let $X$ be a normed linear space and $y^*, x_1^*,\dots, x_n^*\in X^*$. Then 
\linebreak
$y^*\in\sspan\{x_1^*, \dots, x_n^*\}$ if and only if $\bigcap_{i=1}^n \Ker(x_i^*)\subset \Ker (y^*)$.
\item Let $Z$ be any normed linear space. Then $Z$ is not strictly convex if and only if there is a functional $\varphi \in Z^*$ such that $\att(\varphi)$ contains more than one point.
\item 
Let $X$ be a Banach space, $Y\subset X$ be a closed subspace and $q:X\to X/Y$ the quotient mapping. Then $q^*:(X/Y)^*\to Y^\bot$ is an isometry onto $Y^\bot$. 
\end{enumerate}
\end{fact}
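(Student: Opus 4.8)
The plan is to dispatch the three parts separately, each by a short standard argument. For part \emph{(i)}, one implication is immediate: if $y^*=\sum_{i=1}^n c_i x_i^*$ and $x\in\bigcap_{i=1}^n\Ker(x_i^*)$, then $y^*(x)=0$, so $x\in\Ker(y^*)$. For the converse I would introduce the operator $T\colon X\to\er^n$, $T(x)=\bigl(x_1^*(x),\dots,x_n^*(x)\bigr)$, whose kernel is exactly $\bigcap_{i=1}^n\Ker(x_i^*)$. The inclusion $\Ker(T)\subset\Ker(y^*)$ then lets $y^*$ factor through $T$: the assignment $T(x)\mapsto y^*(x)$ is a well-defined linear functional on $T(X)\subset\er^n$, and extending it to all of $\er^n$ produces coefficients $c_1,\dots,c_n$ with $y^*(x)=\sum_{i=1}^n c_i x_i^*(x)$ for every $x$, i.e. $y^*\in\sspan\{x_1^*,\dots,x_n^*\}$.

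For part \emph{(ii)}, I would argue both directions directly. If $Z$ is not strictly convex, then $S_Z$ contains a nontrivial segment $[u,v]$ with $u\neq v$; its midpoint $w=\tfrac12(u+v)$ again lies in $S_Z$, so Hahn--Banach gives $\varphi\in Z^*$ with $\|\varphi\|=1$ and $\varphi(w)=1$. As $\varphi(u)\le1$, $\varphi(v)\le1$ and their mean equals $1$, both must equal $1$, whence $u,v\in\att(\varphi)$. Conversely, if distinct $u,v$ lie in $\att(\varphi)$ for some $\varphi$, normalizing to $\|\varphi\|=1$ gives $\varphi(u)=\varphi(v)=1$, which forces $\|u\|=\|v\|=1$; then for each $t\in[0,1]$ the estimate $1=\varphi(tu+(1-t)v)\le\|tu+(1-t)v\|\le1$ shows $[u,v]\subset S_Z$, so $Z$ fails to be strictly convex.

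Part \emph{(iii)} is the standard fact that the adjoint of a quotient map isometrically embeds $(X/Y)^*$ onto the annihilator $Y^\bot$. First I would verify that $q^*$ lands in $Y^\bot$: for $f\in(X/Y)^*$ and $y\in Y$ we have $(q^*f)(y)=f(q(y))=f(0)=0$. Surjectivity onto $Y^\bot$ follows from factorization: a functional $g\in Y^\bot$ vanishes on $Y$ and hence descends to a well-defined bounded $f$ on $X/Y$ with $q^*f=f\circ q=g$, boundedness being a consequence of $q$ being an open map. The isometry is the only computational point, and it reduces to the defining property $q(U_X)=U_{X/Y}$ of the quotient norm: indeed $\|q^*f\|=\sup_{x\in B_X}|f(q(x))|=\sup_{\xi\in B_{X/Y}}|f(\xi)|=\|f\|$. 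I expect no genuine obstacle in any of the three parts, this last norm identity being the most substantive step.
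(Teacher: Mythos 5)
Your proof is correct and, on part \emph{(ii)} --- the only part the paper proves in detail --- your argument (Hahn--Banach at the midpoint of the segment forcing both endpoints into $\att(\varphi)$, and the converse by normalizing $\varphi$) is essentially identical to the paper's. For parts \emph{(i)} and \emph{(iii)} the paper simply cites standard references, and your factorization argument for \emph{(i)} and your use of $q(U_X)=U_{X/Y}$ (correctly using open balls, since $q(B_X)=B_{X/Y}$ can fail) for \emph{(iii)} are exactly the standard proofs those references contain.
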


\begin{proof}
\emph{(i)} and \emph{(iii)} are standard; for proof of \emph{(iii)} see e.g. \cite[Proposition~2.6]{FHHMZ}.

\emph{(ii)}: Assume that $Z$ is not strictly convex. Take two distinct points $u,v\in S_Z$ such that for any $\lambda\in (0,1)$, $\lambda u+ (1-\lambda) v \in S_Z$, and find (using the Hahn--Banach theorem) a functional $\varphi\in S_{Z^*}$ such that $\varphi(\frac{u+v}{2})=1$. Clearly, if $\varphi(u)<1$, then $\varphi(v)>1$, and vice versa. But $\| \varphi \| =1$, whence $\varphi(u)=\varphi(v)=1$. The opposite implication is trivial.
\end{proof}

The following lemma translates properties {\bf (A)} and {\bf (B)} to the language of quotient spaces, and is essential in the sequel.

\begin{lemma}\label{L:proxchar}
Let $X$ be a Banach space, $Y\subset X$ be a closed subspace and $q:X\to X/Y$ be the quotient mapping. Then:
\begin{enumerate}[(i)]
\item $Y \text{ is proximinal in }X \;\Longleftrightarrow\; q(B_X)=B_{X/Y}$.
\item $Y^\bot \subset \NA(X)\;  \Longleftrightarrow \; (\forall \varphi\in (X/Y)^*) \: (\exists u\in q(B_X)) \; \varphi(u)=\|\varphi\|$.
\item If $X/Y$ is reflexive and $Y$ is proximinal, then $Y^\bot \subset \NA(X)$.
\item If $X/Y$ is strictly convex and $Y^\bot \subset \NA(X)$, then $Y$ is proximinal.
\end{enumerate}
\end{lemma}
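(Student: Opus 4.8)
My plan is to recast each of the four assertions as a statement about the single set $q(B_X)\subseteq X/Y$, relying on two standard properties of the quotient mapping: first, $\|q\|=1$, so $q(B_X)\subseteq B_{X/Y}$; second, $q(U_X)=U_{X/Y}$, so that $U_{X/Y}\subseteq q(B_X)$ and $\overline{q(B_X)}=B_{X/Y}$. Thus $q(B_X)$ is always a convex symmetric set trapped between the open and the closed unit ball of $X/Y$, and it can fail to be all of $B_{X/Y}$ only by omitting boundary points.

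For part~\emph{(i)} I would use Observations~\ref{O:fakta}~\emph{(iii)}: proximinality of $Y$ means every coset $x+Y$ contains a representative $z$ of minimal norm $\|z\|=\dist(x,Y)=\|q(x)\|$. Normalizing such a $z$ shows $q(x)/\|q(x)\|\in q(B_X)$, and conversely, scaling a preimage in $B_X$ of a point of $S_{X/Y}$ back into the coset produces a minimal representative; the routine check yields the equivalence $q(B_X)=B_{X/Y}$. For part~\emph{(ii)} I would invoke Fact~\ref{F:attsspan}~\emph{(iii)}: every element of $Y^\bot$ is $q^*\varphi=\varphi\circ q$ for a unique $\varphi\in(X/Y)^*$ with $\|q^*\varphi\|=\|\varphi\|$. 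Then $q^*\varphi$ attains its norm on $B_X$ precisely when some $u\in q(B_X)$ satisfies $\varphi(u)=\|\varphi\|$, which is exactly the stated condition. Part~\emph{(iii)} then follows immediately: if $Y$ is proximinal then $q(B_X)=B_{X/Y}$ by~\emph{(i)}, reflexivity makes $B_{X/Y}$ weakly compact, so each $\varphi\in(X/Y)^*$ attains its supremum there, and~\emph{(ii)} concludes.

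The crux is part~\emph{(iv)}, and this is where I expect the only genuine idea to be needed. Fix $\xi\in S_{X/Y}$ and, by Hahn--Banach, choose $\varphi\in S_{(X/Y)^*}$ with $\varphi(\xi)=1$. Reading the hypothesis $Y^\bot\subseteq\NA(X)$ through part~\emph{(ii)} yields some $u\in q(B_X)\subseteq B_{X/Y}$ with $\varphi(u)=\|\varphi\|=1$. Now both $\xi$ and $u$ lie in $\att(\varphi)$, and the strict convexity of $X/Y$ forces, via Fact~\ref{F:attsspan}~\emph{(ii)}, this set to be a singleton; hence $u=\xi$ and so $\xi\in q(B_X)$. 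Since $\xi\in S_{X/Y}$ was arbitrary and $U_{X/Y}\subseteq q(B_X)$ already, we obtain $q(B_X)=B_{X/Y}$, and part~\emph{(i)} delivers proximinality of $Y$.

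The main obstacle here is conceptual rather than computational: one must recognize that strict convexity is precisely the property that pins down the norm-attaining vector of each $\varphi$ to a \emph{unique} point, so that the norm-attainment hypothesis, applied to a supporting functional at each $\xi\in S_{X/Y}$, recovers every boundary point of the ball. Once this observation is isolated, the remaining steps are short and the reduction to part~\emph{(i)} is immediate.
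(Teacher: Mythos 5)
Your proposal is correct and follows essentially the same route as the paper: parts \emph{(i)} and \emph{(ii)} via the minimal-norm-representative observation and the isometry $q^*:(X/Y)^*\to Y^\bot$, part \emph{(iii)} by combining them with norm-attainment on reflexive spaces, and part \emph{(iv)} by the Hahn--Banach supporting functional at a point of $S_{X/Y}$ together with strict convexity forcing $\att(\varphi)$ to be a singleton, so that the sphere lies in $q(B_X)$.
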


\begin{proof}
\emph{(i)}: Assume $Y$ is proximinal. Obviously $q(B_X)\subset B_{X/Y}$; to prove the other inclusion, take any $u\in B_{X/Y}$. Further, pick $z\in q^{-1}(u)$. Then $\dist(0,z+Y)=\|u\|\leq 1$, and by Observation~\ref{O:fakta}~(iii) there is a point $y\in z+Y$ such that $\|y\|=\dist(0,z+Y)$, so $q(y)=u$ and $y\in B_X$.

To prove the opposite implication, assume that $q(B_X)=B_{X/Y}$ and take an arbitrary $x\in X$. By homogeneity we can clearly assume that $\dist(0, x+Y)=1$, so $q(x)\in B_{X/Y}$, and our assumption now implies that there exists $y\in B_X$ such that $q(y)=q(x)$ (which is equivalent to $y\in x+Y$). By Observation~\ref{O:fakta}~(iii) we are done. 

\emph{(ii)}: Let $Y^\bot \subset \NA(X)$ and take an arbitrary $\varphi\in (X/Y)^*$. Then $y^*:=\varphi\circ q \in Y^\bot$, and so there exists $x\in B_X$ such that $\varphi(q(x))=y^*(x)=\|y^*\|=\|\varphi\|$ (by Fact~\ref{F:attsspan}~(iii)). Hence $q(x)\in q(B_X)\cap \att(\varphi)$. 

Conversely, take any $y^* \in Y^\bot$ and the corresponding $\varphi\in(X/Y)^*$ such that $y^*=\varphi\circ q$. By the assumption there exists $u\in q(B_X)\cap \att(\varphi)$. So there also exists $x\in B_X\cap q^{-1}(u)$. Then $y^*(x)=\varphi(q(x))=\varphi(u)=\|\varphi\|=\|y^*\|$, and $y^*$ is therefore norm-attaining.

\emph{(iii)}: Trivial from \emph{(i)}, \emph{(ii)} and the fact that on a reflexive space, all functionals are norm-attaining.

\emph{(iv)}: Assume the condition in \emph{(ii)} is satisfied. Clearly $U_{X/Y}\subset q(B_X)\subset B_{X/Y}$, so by \emph{(i)} it suffices to show that $S_{X/Y}\subset q(B_X)$. To that end, take an arbitrary $u\in S_{X/Y}$ and use the Hahn-Banach theorem to find $\varphi\in S_{(X/Y)^*}$ such that $u\in\att(\varphi)$. Since $X/Y$ is strictly convex, it follows by Fact~\ref{F:attsspan} that $\att(\varphi)=\{u\}$, and the condition in \emph{(ii)} now implies that $u\in q(B_X)$. 
\end{proof}

\begin{corollary}\label{C:proxNA}
Let $X$ be a Banach space. Then:
\begin{enumerate}[(a)]
\item Let $Y\subset X$ be a closed subspace of finite codimension in $X$. If $Y$ is proximinal, then $Y^\bot\subset \NA(X)$.
\item A functional $x^*\in X^*$ is norm-attaining if and only if $\Ker(x^*)$ is proximinal.
\end{enumerate}
\end{corollary}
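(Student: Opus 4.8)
The plan is to derive both parts of Corollary~\ref{C:proxNA} directly from Lemma~\ref{L:proxchar}, specializing it to the appropriate subspaces.

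For part \emph{(a)}, the key observation is that when $Y$ has finite codimension, the quotient space $X/Y$ is finite-dimensional, hence reflexive. Thus I would simply invoke Lemma~\ref{L:proxchar}~\emph{(iii)}, which states exactly that reflexivity of $X/Y$ together with proximinality of $Y$ forces $Y^\bot\subset\NA(X)$. The only thing to note explicitly is that a closed subspace of finite codimension $d$ yields $\dim(X/Y)=d<\infty$, so $X/Y$ is reflexive and part \emph{(a)} follows immediately.

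For part \emph{(b)}, I would specialize to the codimension-one case $Y=\Ker(x^*)$ for a nonzero functional $x^*$ (the case $x^*=0$ being trivial, since then $\att(x^*)=B_X\neq\emptyset$ and $\Ker(x^*)=X$ is proximinal). Here $X/Y$ is one-dimensional, so $(X/Y)^*$ is spanned by a single functional, and $Y^\bot=\sspan\{x^*\}$ by Fact~\ref{F:attsspan}~\emph{(i)}. The plan is to show the equivalence $x^*\in\NA(X)\Longleftrightarrow Y^\bot\subset\NA(X)$, which reduces the problem to proximinality via Lemma~\ref{L:proxchar}. The forward direction uses part \emph{(a)} (or \emph{(iii)}): if $\Ker(x^*)$ is proximinal then $Y^\bot\subset\NA(X)$, and in particular $x^*\in\NA(X)$. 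For the reverse direction, I would use that every scalar multiple of a norm-attaining functional is norm-attaining, so $x^*\in\NA(X)$ gives $Y^\bot=\sspan\{x^*\}\subset\NA(X)$; then Lemma~\ref{L:proxchar}~\emph{(iv)} applies, since a one-dimensional space is automatically strictly convex, yielding proximinality of $Y$.

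The main (though minor) obstacle is confirming that a one-dimensional normed space is strictly convex so that Lemma~\ref{L:proxchar}~\emph{(iv)} is applicable; this holds trivially because its unit sphere consists of just two points and contains no nontrivial segment. No genuinely hard step is expected here, as the corollary is designed to repackage Lemma~\ref{L:proxchar} in the two extreme codimension regimes.
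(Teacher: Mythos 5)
Your proposal is correct and follows essentially the same route as the paper: part \emph{(a)} is Lemma~\ref{L:proxchar}~\emph{(iii)} applied to the finite-dimensional (hence reflexive) quotient $X/Y$, and part \emph{(b)} combines \emph{(iii)} and \emph{(iv)} using that the one-dimensional quotient $X/\Ker(x^*)$ is trivially strictly convex. The extra details you supply (the case $x^*=0$, the identification $Y^\bot=\sspan\{x^*\}$, and stability of norm-attainment under scalar multiples) are exactly what the paper leaves implicit in its two-line proof.
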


\begin{proof}
\emph{(a):} The quotient space $X/Y$ is finite-dimensional and in particular it is reflexive.

\emph{(b):} In addition to being reflexive, the quotient space $X/\Ker(x^*)$ is (for trivial reasons) also strictly convex, and the statement follows.
\end{proof}

\begin{proposition}\label{P:propSC}
Let $X$ be a Banach space with a WLUR norm, and $Y\subset X$ be its closed subspace such that $Y^\bot\subset \NA (X)$. Then $X/Y$ is strictly convex. 
\end{proposition}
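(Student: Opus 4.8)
The plan is to argue by contraposition, using Fact~\ref{F:attsspan}~(ii) as the characterization of strict convexity: a space fails to be strictly convex exactly when some functional attains its norm at more than one point of the unit ball. So I would assume that $X/Y$ is \emph{not} strictly convex and derive a contradiction with the WLUR hypothesis on $X$. The main idea is to lift the failure of strict convexity in the quotient back up to $X$, where WLUR can be applied, and then push the resulting weak convergence back down through $q$.

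First I would fix, via Fact~\ref{F:attsspan}~(ii) (and the Hahn--Banach construction in its proof, which produces a norm-one functional), a functional $\varphi\in S_{(X/Y)^*}$ whose attainment set $\att(\varphi)$ contains two distinct points; note every point of $\att(\varphi)$ automatically lies on $S_{X/Y}$. The hypothesis $Y^\bot\subset\NA(X)$ enters through Fact~\ref{F:attsspan}~(iii) together with Lemma~\ref{L:proxchar}~(ii): the lifted functional $y^*:=\varphi\circ q$ lies in $Y^\bot$ with $\|y^*\|=\|\varphi\|=1$, and is therefore norm-attaining, so there is $x_0\in S_X$ with $y^*(x_0)=1$. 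Then $w_0:=q(x_0)$ is a distinguished element of $\att(\varphi)$, and since $\att(\varphi)$ has at least two points I may choose $u\in\att(\varphi)$ with $u\neq w_0$. The objective becomes to prove $u=w_0$, which is the desired contradiction.

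The heart of the argument transports this configuration up to $X$. Because $\|u\|=\dist(0,u)=1$, I would pick a lifting sequence $x_n\in q^{-1}(u)$ with $\|x_n\|\to 1=\|x_0\|$. The midpoint $\tfrac{u+w_0}{2}$ again satisfies $\varphi(\tfrac{u+w_0}{2})=1$ and lies in $B_{X/Y}$, hence in $\att(\varphi)\subset S_{X/Y}$, so $\|u+w_0\|=2$. Combining $\|q\|=1$ (which gives the lower bound $\|x_n+x_0\|\ge\|u+w_0\|=2$) with the triangle inequality (which gives $\|x_n+x_0\|\le\|x_n\|+\|x_0\|\to 2$) pins down $\|x_n+x_0\|\to 2=2\|x_0\|$. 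Now WLUR applied to $x_0$ and $(x_n)$ yields $x_n\stackrel{w}{\to}x_0$; since $q$ is bounded and linear, hence weak-to-weak continuous, the constant sequence $q(x_n)=u$ converges weakly to $q(x_0)=w_0$, and uniqueness of weak limits forces $u=w_0$. This contradiction completes the proof.

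I expect the only delicate point to be the norm bookkeeping producing $\|x_n+x_0\|\to 2$: one must use simultaneously that $u$ admits near-minimal lifts (from $\dist(0,u)=1$) and that the midpoint of $u$ and $w_0$ stays exactly on $S_{X/Y}$, so that the lower bound coming from $\|q\|=1$ matches the upper bound coming from the triangle inequality. Everything else is a direct application of the hypotheses, and no compactness or reflexivity is needed, since the weak convergence supplied by WLUR descends through $q$ automatically.
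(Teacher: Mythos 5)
Your proof is correct and follows essentially the same route as the paper: lift the failure of strict convexity via Fact~\ref{F:attsspan}, use $Y^\bot\subset\NA(X)$ to get an attaining point $x_0$, take near-minimal lifts of the second attainment point, verify the WLUR hypotheses, and contradict the resulting weak convergence through $q$. Your two small variations (deriving $\|x_n+x_0\|\to 2$ from the midpoint lying in $\att(\varphi)$ together with $\|q\|=1$, rather than evaluating $x^*=\varphi\circ q$ on $x_n+x_0$ directly, and invoking weak-to-weak continuity of $q$ plus uniqueness of weak limits instead of an explicit separating functional $\psi$) are only cosmetic rephrasings of the paper's argument.
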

\begin{proof}
Aiming for a contradiction, suppose that $X$ is WLUR and $X/Y$ is not strictly convex. Then we can find a functional $\varphi \in (X/Y)^*$ such that $\| \varphi \| = 1$ and $\att(\varphi)$ contains more than one point (Fact \ref{F:attsspan}~(ii)). 
Now, set $x^*=\varphi\circ q$ where $q:X\to X/Y$ is the quotient mapping. Fact \ref{F:attsspan}~(iii) gives us that $\|x^*\|=\|\varphi\|=1$ and $x^*\in Y^\bot$. As $Y^\bot\subset\NA(X)$, $x^*$ attains its norm at a point $x\in S_X$ and $\varphi (q(x))= x^*(x)=1$, whence $q(x)\in \att (\varphi)$.

Take any $u\in \att(\varphi)\setminus \{ q(x) \}$ and any sequence of elements $x_n\in u$ such that $\lim_{n\to\infty }\|x_n\| = 1= \|x\|$. Then $\lim_{n\to\infty}\|x+x_n\|=2$ since 
$$2=\varphi(q(x_n)+q(x))=x^*(x_n+x)\leq \|x_n+x\|\leq\|x_n\| + \|x\| \to 2, \quad n \to \infty.$$
Our assumption that $X$ is WLUR yields that $x_n$ converges to $x$ weakly in $X$. 

To obtain the desired contradiction, take any $\psi \in (X/Y)^*$ such that $\psi(q(x))\neq \psi(u)$ and set $y^*:=\psi\circ q\in X^*$. Then for all $n\in\en $ we have $y^*(x)=\psi(q(x))\neq \psi(u) = y^*(x_n)$, so $y^*(x_n)$ does not converge to $y^*(x)$ which is a contradiction.
\end{proof}

Let us note that these observations provide a new proof for the following result of Indumathi (see \cite[Corollary 3]{Ind87}).
\begin{corollary}\label{C:suffWLUR}
Let $X$ be a Banach space with a WLUR norm and $Y\subset X$ be its closed subspace of finite codimension. Then $Y$ is proximinal if and only if $Y^\bot\subset\NA(X)$. 
\end{corollary}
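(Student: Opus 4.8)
The plan is to split the equivalence into its two implications, each of which follows by directly combining results already proved above; no genuinely new argument is required, which is precisely why this is stated as a corollary rather than a theorem.

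For the forward implication, suppose $Y$ is proximinal. Since $Y$ has finite codimension in $X$, the quotient $X/Y$ is finite-dimensional and hence reflexive. I would therefore invoke Corollary~\ref{C:proxNA}~(a) directly (equivalently, Lemma~\ref{L:proxchar}~(iii)), which immediately gives $Y^\bot\subset\NA(X)$. Note that this direction uses only the finite codimension of $Y$, and not the WLUR hypothesis at all.

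For the converse, suppose $Y^\bot\subset\NA(X)$; here the WLUR norm finally enters. By Proposition~\ref{P:propSC}, the hypotheses that $X$ is WLUR and $Y^\bot\subset\NA(X)$ force the quotient $X/Y$ to be strictly convex. Having thereby secured both the strict convexity of $X/Y$ and the standing assumption $Y^\bot\subset\NA(X)$, I would then apply Lemma~\ref{L:proxchar}~(iv), whose conclusion is exactly that $Y$ is proximinal.

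Since every step is a direct citation of a previously established statement, there is no real obstacle to overcome; the only thing to check is that the hypotheses line up correctly. Specifically, one must observe that finite codimension forces $X/Y$ to be reflexive (indeed finite-dimensional) for the easy direction, and that Proposition~\ref{P:propSC} was stated for an \emph{arbitrary} closed subspace $Y$---not merely those of finite codimension---so that its conclusion is genuinely available in the converse. The substantive content of the corollary lives entirely in Proposition~\ref{P:propSC}.
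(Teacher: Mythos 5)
Your proposal is correct and follows exactly the paper's own route: the paper's proof cites Proposition~\ref{P:propSC}, Lemma~\ref{L:proxchar} and Corollary~\ref{C:proxNA} in precisely the combination you spell out (Corollary~\ref{C:proxNA}~(a) for the forward direction via finite codimension, and Proposition~\ref{P:propSC} plus Lemma~\ref{L:proxchar}~\emph{(iv)} for the converse). Your write-up simply makes explicit the bookkeeping the paper leaves to the reader.
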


\begin{proof}
The statement follows immediately from Proposition~\ref{P:propSC}, Lemma~\ref{L:proxchar} and Corollary~\ref{C:proxNA}.
\end{proof}

The following is a simple example of a strictly convex space $X$ with a finite-codimensional closed subspace $Y$ such that $X/Y$ is not strictly convex.

\begin{example} \label{E:SCquot}
By $[0,\omega]$ we mean the compact interval of ordinals with the order topology. Let us define the Banach space $X$ as $C([0,\omega]\times\{0,1\})$ with the norm $\|\cdot\|$ defined by
$$\|x\|^2= \|x\|^2_{\infty} + \sum_{k=0}^\infty 2^{-k}\big( x(k,0)^2+x(k,1)^2 \big).$$
It is easy to check that $\|\cdot\|$ is indeed a norm on $C([0,\omega]\times\{0,1\})$, which is moreover equivalent to $\|\cdot\|_\infty$.  It is also clear that the dual to $(C([0,\omega]\times\{0,1\}),\|\cdot\|_\infty)$ is $(\ell_1([0,\omega]\times\{0,1\}),\|\cdot\|_1)$, so $X^*$ is isomorphic to the latter space.

Set $Y:=\{ x\in X\setsep x(\omega,0)=x(\omega,1)=0 \}$; then $Y$ is a closed subspace of codimension $2$ and $Y^\bot = \sspan\{ \delta_{(\omega,0)}, \delta_{(\omega,1)} \}$ and $Y$. We claim that $X$ is strictly convex, but $X/Y$ is not.

Let $x,y\in X$ be arbitrary points satisfying 
\begin{equation} \label{E:simplecharSC}
2\|x\|^2+2\|y\|^2 - \|x+y\|^2=0;
\end{equation}
by a well-known (and easy) characterization of strict convexity (see e.g. \cite{FHHMZ}) it suffices to prove that $x=y$. One can rewrite equation \eqref{E:simplecharSC} using the definition of $\|\cdot\|$, and observe that the left hand side is the sum of the term
$$2\|x\|^2_\infty+2\|y\|^2_\infty - \|x+y\|^2_\infty,$$
and all terms of the form
$$2^{-k}\big(2x(k,i)^2+2y(k,i)^2-(x(k,i)+y(k,i))^2\big), \quad (k,i)\in[0,\omega)\times\{0,1\}$$
which are easily seen to be non-negative. Since the sum of all these terms is equal to $0$, it follows that so is each of the terms. In particular, $x(k,i)=y(k,i)$ for all $(k,i)\in [0,\omega)\times\{0,1\}$, and the continuity of $x$ and $y$ now implies that $x=y$.

We shall now prove that $X/Y$ is isometrically isomorphic to $(\er^2,\|\cdot\|_\infty)$; in particular, $X/Y$ is not strictly convex. 
Consider the quotient mapping $q:X\to X/Y : x\mapsto x+Y = q(x)$. Obviously $z\in q(x)$ if and only if $z(\omega,i)=x(\omega,i)$, $i=0,1$, whence the mapping $L:X/Y\to \er^2: x+Y\mapsto (x(\omega,0), x(\omega,1))$ is a linear bijection. It remains to show that $L:X/Y\to \big( \er^2,\|\cdot\|_\infty \big)$ is an isometry.

Let $x\in X$ be given; by definition,  
$$\|x+Y\|=\dist(0,x+Y)\geq \max\{|x(\omega,0)|, |x(\omega,1)| \}.$$
To prove the opposite inequality, set for all $N\in\en$,
$$x_N:=x\cdot \chi_{[N,\omega]\times\{0,1\}},$$
where $\chi_A$ denotes the characteristic function of $A\subset[0,\omega]\times\{0,1\}$; then $x_N\in x+Y$ as $x_N(\omega,i)=x(\omega,i)$, $i=0,1$. Now,
$$\|x_N\|^2= \|x_N\|^2_\infty + \sum_{k=N}^\infty 2^{-k} \big( x(k,0)^2+x(k,1)^2 \big),$$
and it follows from the continuity of $x$ that 
$$\lim_{N\to\infty}\|x_N\|=\max\{|x(\omega,0)|,|x(\omega,1)| \};$$
hence, $\dist(0,x+Y)\leq \max\{|x(\omega,0)|, |x(\omega,1)| \}.$

\end{example}

In view of the last example and Proposition~\ref{P:propSC} it is natural to ask the following.
\begin{problem}
Can the assumption that $X$ be WLUR in Proposition~\ref{P:propSC} be weakened to SC? Equivalently, can Example~\ref{E:SCquot} be done in such a way that we moreover have $Y^\bot\subset\NA(X)$? 
\end{problem}

\begin{remark} The following classical results are related to the problem above. Let $X$ be a Banach space.
\begin{enumerate}[(a)]
\item Let $X$ be SC and $Y$ be a reflexive subspace of $X$. Then $X/Y$ is SC (V.~Klee, \cite{Klee59}).
\item The following are equivalent (see e.g. M.~Day, \cite[Chapter VII, \S2]{Day73}):
	\begin{enumerate}[(i)]
	\item $X^*$ is G\^ateaux smooth (equivalently, at each point of $S_{X^*}$ there is only one supporting hyperplane of $B_{X^*}$);
	\item every quotient of $X$ is SC;
	\item every $2$-dimensional quotient of $X$ is SC.
	\end{enumerate}
\end{enumerate}
\end{remark}

It is interesting to note that having a strictly convex norm on $X/Y$ is not the only known condition which ensures that the implication $\text{\bf (B)}\Longrightarrow\text{\bf (A)}$ holds. The following theorem can be found in \cite{GodInd99} or \cite{GodBSc0}:

\begin{thmGodInd}[G.~Godefroy, V.~Indumathi] \label{T:GIcnaught}
Let $X$ be a closed subspace of $c_0$ and $Y\subset X$ be a closed finite-codimensional subspace of $X$. Then $Y$ is proximinal in $X$ if and only if $Y^\bot\subset\NA(X)$. 
\end{thmGodInd}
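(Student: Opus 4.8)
The plan is to separate the two implications: the forward one is immediate, while the converse carries all of the difficulty and is the only place where $X\subseteq c_0$ is genuinely used. For the forward direction, if $Y$ is proximinal then $Y^\bot\subseteq\NA(X)$ is exactly Corollary~\ref{C:proxNA}~(a): since $Y$ has finite codimension, $X/Y$ is finite-dimensional and hence reflexive, and the conclusion follows from parts (i)--(iii) of Lemma~\ref{L:proxchar}. This half uses nothing about $c_0$.

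\emph{Reduction of the converse.} Assume $Y^\bot\subseteq\NA(X)$ and let $q\colon X\to X/Y$ be the quotient map, with $\dim(X/Y)=n<\infty$. By Lemma~\ref{L:proxchar}~(i) it suffices to prove $q(B_X)=B_{X/Y}$. The set $q(B_X)$ is convex and symmetric, and since $U_{X/Y}\subseteq q(B_X)\subseteq B_{X/Y}$ its closure is all of $B_{X/Y}$; so everything reduces to showing that $q(B_X)$ is closed, i.e. that it contains $S_{X/Y}$. Here the exposed points of $B_{X/Y}$ come for free: if $e$ is the unique maximizer of some $\varphi\in S_{(X/Y)^*}$ then $\att(\varphi)=\{e\}$, and Lemma~\ref{L:proxchar}~(ii) forces $e\in q(B_X)$. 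By convexity $q(B_X)\supseteq\conv E$, where $E$ is the set of exposed points, and by Straszewicz's theorem $\cconv E=B_{X/Y}$, which merely reconfirms the density. If $X/Y$ were strictly convex every boundary point would be exposed and we would be finished by Lemma~\ref{L:proxchar}~(iv); but a quotient of a subspace of $c_0$ need not be strictly convex (already $X=c_0$ fails this), so the boundary points lying in the relative interiors of the proper faces of $B_{X/Y}$ require a separate argument.

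\emph{The main route and the main obstacle.} To attain an arbitrary $u\in S_{X/Y}$ I would take a norm-minimizing sequence $(x^{(m)})$ in the coset $q^{-1}(u)$, so that $x^{(m)}\in c_0$ and $\|x^{(m)}\|\to\|u\|=1$. Being bounded, $(x^{(m)})$ has a weak-$*$ cluster point $\xi$ in $B_{\ell_\infty}$, where $\ell_\infty=c_0^{**}$, and since $(x^{(m)})\subseteq X$ the point $\xi$ lies in the weak-$*$ closure of $X$. Crucially, each functional spanning $Y^\bot$ is represented by an element of $\ell_1=c_0^*$ and is therefore weak-$*$ continuous on $\ell_\infty$, so $\xi$ still satisfies every linear constraint defining the coset. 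The difficulty is that $\xi$ need not belong to $c_0$, let alone to $X$, and overcoming it is the technical heart of the proof: one must exploit the special position of $c_0$ inside $\ell_\infty$ (it is an $M$-ideal) together with the $\ell_1$-summability of the representing functionals, which confines the relevant mass to finitely many coordinates, in order to replace $\xi$ by a genuine element of $X\cap B_X$ in the same coset. Neither Lemma~\ref{L:proxchar}~(iv) nor the WLUR argument of Proposition~\ref{P:propSC} can supply this step, since the canonical norm of $c_0$ is not even strictly convex.

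\emph{An alternative organization.} One can instead run the converse as an induction on the codimension $n$, the base case $n=1$ being precisely Corollary~\ref{C:proxNA}~(b). In the inductive step, when $X/Y$ is not strictly convex one chooses $\varphi\in S_{(X/Y)^*}$ with $\att(\varphi)$ of dimension at least $1$ (Fact~\ref{F:attsspan}~(ii)), sets $x^*=\varphi\circ q\in Y^\bot$, and passes to the proximinal hyperplane $H=\Ker(x^*)\supseteq Y$, which is again a subspace of $c_0$ with $Y$ of codimension $n-1$ in it. The inductive hypothesis then attains the central slice $\Ker\varphi\cap B_{X/Y}$, and it remains to transfer attainment to the exposed face $\att(\varphi)$ itself. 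This reshuffling does not avoid the essential obstacle above; it merely relocates it into the non-transitivity of the metric projection, which is once more where the structure of $c_0$ must enter.
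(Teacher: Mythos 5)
Your forward implication, your reduction of the converse to $q(B_X)=B_{X/Y}$ via Lemma~\ref{L:proxchar}~(i), and your observation that every \emph{exposed} point of $B_{X/Y}$ lies in $q(B_X)$ by Lemma~\ref{L:proxchar}~(ii) are all correct. But the proposal has a genuine gap, and you concede it yourself: both of your routes stop exactly at the decisive step, once deferring to ``the technical heart of the proof'' and once admitting that the inductive reshuffling ``does not avoid the essential obstacle''. The gap is not a formality that routine work would fill. Everything you actually establish --- density of the convex symmetric set $q(B_X)$ in $B_{X/Y}$, attainment of all exposed points, existence of a weak-$*$ cluster point $\xi\in X^{\bot\bot}$ of a minimizing sequence satisfying all coset constraints --- is available in \emph{every} Banach space satisfying $Y^\bot\subset\NA(X)$, since evaluation of $x^{**}$ at a fixed $x^*$ is weak-$*$ continuous on any bidual. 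Yet Theorem~\ref{C:counterex} produces, in every non-reflexive space, a renorming in which all of these facts hold and $Y$ is still not proximinal; concretely, in that construction the two missing points $\pm(e_1+e_2)$ of $q(B_X)$ are extreme but \emph{not} exposed points of $B_{X/Y}$, so the exposed-point mechanism never sees them. Hence no argument assembled only from the ingredients you have on the table can prove the theorem; something specific to $c_0$ must enter, and your text only gestures at candidates (M-ideal structure, summability) without carrying out any argument with them.

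The idea you are missing --- the one Godefroy and Indumathi actually use, as sketched in the paper immediately after the statement --- is \emph{polyhedrality}: under the hypothesis $Y^\bot\subset\NA(X)$ with $X\subset c_0$, the finite-dimensional quotient $X/Y$ is polyhedral, i.e. $B_{X/Y}$ is the convex hull of finitely many points. (For $X=c_0$ this is transparent: norm-attaining functionals on $c_0$ are exactly the finitely supported elements of $\ell_1$, so the finite-dimensional space $Y^\bot\cong (X/Y)^*$ lies in the span of finitely many coordinate functionals, hence sits isometrically inside some $\ell_1^N$ and is polyhedral, and so is its predual $X/Y$. For a general subspace $X\subset c_0$ this step requires real work, and it is where the content of their theorem lies. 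Note that polyhedrality is \emph{not} automatic without the hypothesis: finite-dimensional quotients of $c_0$ by arbitrary finite-codimensional subspaces can fail to be polyhedral.) Polyhedrality dissolves your obstacle completely: a polytope has only finitely many extreme points, every extreme point of a polytope is exposed, so each of them lies in $q(B_X)$ by your own argument; and since $q(B_X)$ is convex and one only needs the convex hull of \emph{finitely many} of its points --- no closure operation, which is precisely where your Straszewicz step died --- it follows that $B_{X/Y}=\conv(\text{extreme points})\subset q(B_X)$, and Lemma~\ref{L:proxchar}~(i) concludes. In other words, the flat faces you worried about are swallowed by the convex hull of the vertices; the role of $c_0$ is to make the vertex set finite, not to repair minimizing sequences in the coset.
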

\noindent Instead of showing that $X/Y$ is strictly convex (which is not the case), the authors essentially prove that $X/Y$ is polyhedral (i.e. the unit ball is the convex hull of finitely many points), and then show that each of the extremal points is contained in $q(B_X)$ where $q:X\to X/Y$ is the quotient mapping (in the same manner as we show it for each point of the sphere $S_{X/Y}$); it then follows from the convexity of $q(B_X)$ that $q(B_X)=B_{X/Y}$, and therefore $Y$ is proximinal. (In their proof, our Lemma~\ref{L:proxchar}~\emph{(i)} is replaced by an essentially equivalent theorem of Garkavi, which is not formulated in terms of quotient spaces.)

The next proposition can be regarded as a generalization of Corollary~\ref{C:proxNA}~\emph{(a)}. This can be seen if we reformulate the corollary as follows:

\emph{Let $X$ be a Banach space and $x^*\in X^*$. Then the hyperplane $\Ker(x^*)$ is proximinal whenever it contains a finite-codimensional proximinal subspace $Y$.}

\begin{proposition}\label{P:prox1}
Let $X$ be a normed linear space and let $Y\subset Z\subset X$ be linear subspaces such that the codimension of $Y$ in $Z$ is finite. If $Y$ is proximinal (in $X$), then $Z$ is also proximinal. 
\end{proposition}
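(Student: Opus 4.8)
The plan is to pass to the quotient space $X/Y$ and combine two facts: that finite-dimensional subspaces are automatically proximinal, and that $Y$ is proximinal by hypothesis. Since proximinal sets are closed, $Y$ is closed and $X/Y$ is a genuine normed space; let $q:X\to X/Y$ be the quotient mapping. The image $q(Z)=Z/Y$ is a subspace of $X/Y$ whose dimension equals the codimension of $Y$ in $Z$, hence is finite-dimensional and therefore proximinal in $X/Y$ (a closed bounded subset of a finite-dimensional normed space is compact). I would then fix $x\in X$ and exhibit a nearest point to $x$ in $Z$, which by Observation~\ref{O:fakta}~(iii) suffices.

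First I would record the routine inequality $\dist(x,Z)\ge\dist(q(x),q(Z))$: for every $z\in Z$ we have $\|x-z\|\ge\|q(x)-q(z)\|\ge\dist(q(x),q(Z))$, and taking the infimum over $z\in Z$ gives the claim. Next, using proximinality of $q(Z)$ in $X/Y$, I would pick $z_0\in Z$ with $\|q(x)-q(z_0)\|=\dist(q(x),q(Z))$ and set $x':=x-z_0$, so that $\|q(x')\|=\dist(q(x),q(Z))$. Since $\|q(x')\|=\dist(0,x'+Y)$, proximinality of $Y$ provides (again by Observation~\ref{O:fakta}~(iii)) an element $y_0\in Y$ with $\|x'-y_0\|=\|q(x')\|$. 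Putting $z_1:=z_0+y_0$, we have $z_1\in Z$ because $y_0\in Y\subset Z$, and $x-z_1=x'-y_0$, so that
$$\|x-z_1\|=\|q(x')\|=\dist(q(x),q(Z))\le\dist(x,Z).$$
As $z_1\in Z$ forces $\|x-z_1\|\ge\dist(x,Z)$, we get equality and $z_1$ is a nearest point; since $x$ was arbitrary, $Z$ is proximinal.

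The argument is a transitivity principle: proximinality of $Y$ in $X$ together with proximinality of $Z/Y$ in $X/Y$ yields proximinality of $Z$ in $X$, and here the second hypothesis holds for free because $Z/Y$ is finite-dimensional. The one point deserving care is that proximinality is \emph{not} transitive in general, so the two ingredients must genuinely cooperate: finite-dimensionality supplies the minimiser $q(x')$ downstairs, while proximinality of $Y$ is exactly what allows this quotient minimiser to be lifted, without loss of distance, to an honest minimiser $x'-y_0$ of $X$ sitting in the correct coset $x-z_1+Y\subset x-Z$. Verifying that the lift does not increase the norm beyond $\|q(x')\|$ — which is precisely the content of the minimal-norm characterisation in Observation~\ref{O:fakta}~(iii) — is the crux that makes the distances match up and is the step I would be most careful about.
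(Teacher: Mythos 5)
Your proof is correct, but it is a genuinely different argument from the one in the paper. The paper proceeds by induction on the codimension of $Y$ in $Z$: in the codimension-one case it fixes $x^*\in S_{X^*}$ with $Y=Z\cap\Ker(x^*)$ and $v\in Z\setminus Y$, and minimises the continuous function $c\mapsto \dist(0,x+cv+Y)$ over $\er$, using the functional $x^*$ to confine the minimisation to a compact interval; the minimising coset is then searched using proximinality of $Y$. You instead avoid induction entirely by passing to the quotient $X/Y$: the subspace $q(Z)=Z/Y$ is finite-dimensional, hence proximinal in $X/Y$, and you lift the quotient minimiser back to $X$ via proximinality of $Y$, checking that the lift realises the distance through the chain $\|x-z_1\|=\|q(x')\|=\dist(q(x),q(Z))\le\dist(x,Z)\le\|x-z_1\|$. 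Each step of this chain is valid, and Observation~\ref{O:fakta}~(iii) is invoked exactly where needed. Both arguments ultimately rest on finite-dimensional compactness (the paper on a compact interval in $\er$, you on closed bounded sets in $q(Z)$), but yours has two advantages: it is a one-shot argument with no induction, and it isolates a clean transitivity principle --- if $Y$ is proximinal in $X$ and $Z/Y$ is proximinal in $X/Y$, then $Z$ is proximinal in $X$ --- of which the proposition is the special case where $Z/Y$ is finite-dimensional. It is also more in the spirit of the paper's own philosophy of translating proximinality questions to quotient spaces (as in Lemma~\ref{L:proxchar}), which, somewhat ironically, the paper's proof of this particular proposition does not use.
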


\begin{proof}
First assume that the codimension of $Y$ in $Z$ is $1$. Fix a functional $x^*\in S_X$ such that $Y=Z\cap \Ker (x^*)$ and any vector $v\in Z\setminus Y$; then it is easy to see that 
$$Z=\bigcup_{c\in\er}(cv+Y).$$
Now, pick an arbitrary $x\in X$; we aim to find a point $y\in x+Z$ with $\|y\|=\dist(0, x+Z)$. To that end, let us consider the function $\varphi:\er\to\er$ defined at $c\in\er$ by the formula
$$\varphi(c):=\dist(0,x+cv+Y).$$

Assume, for the moment, that we have found a global minimum of $\varphi$ at a point $c$; then we can use the proximinality of $Y$ to find a point $y\in x + cv + Y$ with minimal norm. Clearly $y$ has the minimal norm also in $x+Z$. 

Since $\varphi$ is easily seen to be continuous (even $\|v\|$-Lipschitz), it is enough to show that there is a compact set $I\subset \er$ with $\varphi(\er\setminus I)\subset (\varphi(0),\infty)$. We claim that it suffices to set
$$I:=\{ c\in\er\setsep |x^*(x+cv)|\leq \|x\| \}.$$
Indeed, if $c\in\er\setminus I$, then $|x^*(x+cv)|> \|x\|$. Since $Y\subset \Ker (x^*)$, the functional $x^*\in S_{X^*}$ is constant on any coset of the form $z+Y$, $z\in X$. It follows that 
\begin{IEEEeqnarray*}{rCl}
\varphi(c)&=&\inf\{\|x+cv+y\|\setsep y\in Y\} \\
&\geq &\inf\{ |x^*(x+cv+y)|\setsep y\in Y\} = |x^*(x+cv)| > \|x\|\geq\varphi(0);
\end{IEEEeqnarray*}
In particular, $0\in I$. The boundedness of $I$ follows from the fact that $v\notin \Ker(x^*)$. 

Now assume that we have proved the assertion of the Proposition in case the codimension of $Y$ in $Z$ is equal to $n\in\en$, and suppose we have closed subspaces $Y$ and $Z$ of $X$ such that $Y\subset Z$ with codimension $n+1$. This means that there are some $x_1^*,\dots, x_{n+1}^*\in X^*$ such that $Y=Z\cap\bigcap_{i=1}^{n+1}\Ker (x_i^*)$. Set $Z_1:=Z\cap\bigcap_{i=1}^n \Ker(x_i^*)$. Then the codimension of $Y$ in $Z_1$ is $1$, so $Z_1$ is proximinal, and the codimension of $Z_1$ in $Z$ is $n$, so $Z$ is proximinal. 
\end{proof}

\begin{remark}
Following W.~Pollul \cite{Pollul}, by $Z \subset^p Y$ we mean that $Z$ is a proximinal subspace of the normed linear space $Y$; it is then natural to ask under what conditions is the relation $\subset^p$ transitive (in which case the ambient space is said to be a \emph{$P$-space}). Indumathi \cite[Corollary~5]{Ind87} gives a characterization: A normed linear space $X$ is a $P$-space if and only if it is an $R(1)$ space and $\NA(X)$ is orthogonally linear (for definition see the same article). In particular, $X$ is a $P$-space if it is an $R(1)$ space and $\NA(X)$ is a vector space. 

Another related question is whether proximinality of subspaces is preserved under intersections. This is not always the case either; if a Banach space $X$ has the property that the intersection of arbitrary two finite-codimensional proximinal subspaces is proximinal, $X$ is said to be \emph{lattice-proximinal}. A simple characterization was given by Godefroy and Indumathi in \cite{GodInd13}: $X$ is lattice-proximinal if and only if it is $R(1)$ and $\NA(X)$ is a vector space. An example of such a space is $c_0$; we already know that it is $R(1)$ (Theorem~GI), and it is easy to see that norm-attaining functionals on $c_0$ are exactly finitely supported elements of $\ell_1$.
\end{remark}

\section{$\NA(X)$ need not be 2-lineable}

The following renorming of $c_0$ is due to C.J.~Read \cite{Read} and it is the first Banach space known to have no proximinal subspaces of finite codimension strictly larger than $1$ (Theorem~R). In this section we will show that the set of norm-attaining functionals on this space is not $2$-lineable, solving Problem~\ref{Problem1}.

\begin{definition}[C.J.~Read]\label{defRead}
Let $c_{00}(\qe)$ be the terminating sequences with rational coefficients, and let $(u_n)_{n=1}^\infty$ be a sequence of elements of $c_{00}(\qe)$ which lists every element infinitely many times.

Further, let $(a_n)_{n=1}^\infty$ be a strictly increasing sequence of positive integers satisfying for each $n\in \en$,
$$a_n>\max \:\supp u_n, \quad \text{and}\quad a_n\geq \|u_n\|_1.$$
The norm $\vertiii \cdot$ on $c_0$ is defined as follows:
$$\vertiii x = \|x\|_\infty + \sum_{n=1}^\infty 2^{-a_n^2} |\langle x, u_n-e_{a_n}\rangle |.$$
Here $\|x\|_\infty =\sup_n |x_n|$ is the usual norm on $c_0$, $e_j$'s are the vectors from the usual cannonical basis, and the duality $\langle \cdot, \cdot \rangle$ is the $\langle c_0, l_1\rangle$ duality.
\end{definition}

\begin{remark}\mbox{}
Proposition \ref{P:prox1} implies that if a normed linear space $X$ has no proximinal subspace of codimension $2$, then it has no proximinal subspace of any finite codimension larger than $2$ either. Hence, in the proof of C.J.~Read it is enough to show that the space $(c_0,\vertiii{\cdot})$ constructed there has no proximinal subspace of codimension $2$ (he proves it at once for all codimensions, slightly complicating the proof).
\end{remark}

The following theorem shows that the construction of Read does, in fact, answer Problem~\ref{Problem1}.

\begin{theorem}\label{T:ReadWorks}
Let $X$ be the space $(c_0,\vertiii{\cdot})$ from Definition~\ref{defRead}. Then $\NA(X)$ contains no linear subspaces of dimension larger than one.
\end{theorem}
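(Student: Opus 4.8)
The plan is to argue by contradiction and reduce everything, via Lemma~\ref{L:proxchar}~\emph{(iv)} and Theorem~R, to proving that a suitable finite-codimensional quotient of $X$ is strictly convex. Suppose $\NA(X)$ contained a subspace of dimension at least $2$; choosing inside it a two-dimensional subspace spanned by linearly independent $f,g\in X^*=\ell_1$ and setting $Y:=\Ker(f)\cap\Ker(g)$, Fact~\ref{F:attsspan}~\emph{(i)} gives that $Y$ is closed of codimension $2$ with $Y^\bot=\sspan\{f,g\}\subset\NA(X)$. By Lemma~\ref{L:proxchar}~\emph{(iv)} it would then suffice to show that $X/Y$ is strictly convex, because that forces $Y$ to be proximinal, contradicting Theorem~R (no proximinal subspaces of finite codimension $\geq 2$). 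Thus the whole theorem reduces to the single assertion: \emph{if $Y\subset X$ has finite codimension and $Y^\bot\subset\NA(X)$, then $X/Y$ is strictly convex.}

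To prove this I would follow the skeleton of Proposition~\ref{P:propSC}. Assuming $X/Y$ is not strictly convex, Fact~\ref{F:attsspan}~\emph{(ii)} yields $\varphi\in S_{(X/Y)^*}$ with at least two points in $\att(\varphi)$. The functional $x^*:=\varphi\circ q$ lies in $Y^\bot\subset\NA(X)$, so it attains its norm at some $x\in S_X$ with $q(x)\in\att(\varphi)$; I then pick $u\in\att(\varphi)\setminus\{q(x)\}$ and a sequence $x_n\in u$ with $\vertiii{x_n}\to 1$. Exactly as in Proposition~\ref{P:propSC}, the identities $x^*(x)=x^*(x_n)=1$ force $\vertiii{x+x_n}\to 2$, i.e. we sit at the boundary of the triangle inequality.

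Here the argument must depart from the WLUR case, where weak convergence $x_n\to x$ was available for free. Instead I would exploit the explicit shape of $\vertiii\cdot$. Writing $\phi_m:=\langle\cdot,u_m-e_{a_m}\rangle$ and $w_m:=2^{-a_m^2}$, so that $\vertiii{z}=\|z\|_\infty+\sum_m w_m|\phi_m(z)|$, the defect $\vertiii{x}+\vertiii{x_n}-\vertiii{x+x_n}$ is a sum of the nonnegative $\ell_\infty$-term and of the nonnegative terms $w_m\big(|\phi_m(x)|+|\phi_m(x_n)|-|\phi_m(x+x_n)|\big)$; since the total tends to $0$, each term does. Passing to a weak-$*$ cluster point $\xi\in\ell_\infty=c_0^{**}$ of the bounded sequence $(x_n)$, every $h\in Y^\bot\subset\ell_1$ is weak-$*$ continuous, so the finite-dimensional quotient map extends to a weak-$*$ continuous $\tilde q:\ell_\infty\to X/Y$ with $\tilde q(\xi)=u$, while $x^*(\xi)=x^*(x)=1$. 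Extending the norm to $\ell_\infty$ by the same formula and using that $B_X$ is weak-$*$ dense in the extended ball (via truncations), one gets $\vertiii{\xi}_{\mathrm{ext}}=\vertiii{x}=1$ and that the whole segment $[x,\xi]$ lies on the extended unit sphere; moreover the termwise limits of the vanishing defect terms say that $\phi_m(x)$ and $\phi_m(\xi)$ are of the same sign for every $m$. The final move is to feed in the two defining features of Read's construction not yet used: that $(u_n)$ lists every element of $c_{00}(\qe)$ infinitely often, and that $x\in c_0$ (so $x_{a_m}\to 0$). Probing the sign conditions along the infinitely many indices $m$ with a prescribed value of $u_m$ should pin down $\xi$ on the annihilator directions and yield $h(\xi)=h(x)$ for all $h\in Y^\bot$, whence $u=\tilde q(\xi)=q(x)$ --- contradicting $u\neq q(x)$.

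The main obstacle is precisely this last step: converting the sign and norm data on the weak-$*$ limit into the equality $\tilde q(\xi)=q(x)$. In the WLUR proof the analogous conclusion $y^*(x_n)\to y^*(x)$ was immediate from weak convergence; here the minimizing sequence may genuinely escape to infinity, so $\xi$ can lie in $\ell_\infty\setminus c_0$ and one must show that this escape does not move the quotient image. This is exactly where the arithmetic of Read's construction --- the disjointness $a_n>\max\supp u_n$, the size bound $a_n\geq\|u_n\|_1$, the rapidly decaying weights $2^{-a_n^2}$, and above all the density of $(u_n)$ --- has to do the work that reflexivity or the WLUR property would otherwise provide.
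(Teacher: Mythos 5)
Your global strategy coincides with the paper's: reduce, via Lemma~\ref{L:proxchar}~\emph{(iv)} and Theorem~R, to showing that $Y^\bot\subset\NA(X)$ with $\dim(X/Y)<\infty$ forces $X/Y$ to be strictly convex, and prove that by running the scheme of Proposition~\ref{P:propSC} with weak$^*$ limits in $\ell_\infty$ replacing the weak limits that WLUR provided. Up to and including the same-sign conditions ($\phi_m(x)\,\phi_m(\xi)\geq 0$ for every $m$, obtained from the termwise vanishing of the defect together with weak$^*$ convergence of $(x_n)$), you match the paper's proof step by step. But your argument stops exactly where the real work begins: the step that actually uses the arithmetic of Read's construction is never carried out, and you say so yourself (``the main obstacle is precisely this last step''). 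Since that step is the heart of the proof and is not routine, this is a genuine gap, not a complete proof.

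Two concrete points about the missing step. First, your intended target --- ``$h(\xi)=h(x)$ for all $h\in Y^\bot$'', i.e. $\tilde q(\xi)=q(x)$ --- cannot be extracted from the sign conditions alone: they are satisfied whenever $\xi=\lambda x$ with $\lambda\geq 0$, so the most they can give is proportionality. The paper's Claim is exactly that: there exists $\lambda\geq 0$ with $\xi=\lambda x$; only afterwards is the contradiction obtained, using that then $\xi\in c_0$ and $x_n\to\xi$ weakly, against a functional $\psi\in(X/Y)^*$ with $\psi(q(x))<0<\psi(u)$ (equivalently, from $\tilde q(\xi)=u$, $\tilde q(\xi)=\lambda q(x)$ and $\|u\|=\|q(x)\|=1$). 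Second, the argument that proves the Claim is a separation-plus-scaling trick: if $\xi$ is not a nonnegative multiple of $x$, Hahn--Banach separation of the weak$^*$-closed cones $[0,\infty)\cdot x$ and $[0,\infty)\cdot\xi$ produces $z_3^*\in\ell_1$ with $z_3^*(x)<0<\xi(z_3^*)$; one then approximates $z_3^*$ by $z^*\in c_{00}(\qe)$ keeping the strict inequalities, and multiplies $z^*$ by a large rational so that $|z^*(x)|>\|x\|_\infty$ and $\xi(z^*)>\|\xi\|_\infty$. Choosing $k$ with $u_k=z^*$ (here the density of the list $(u_n)$ enters) yields $\langle x,u_k-e_{a_k}\rangle\leq z^*(x)+\|x\|_\infty<0$ while $\langle\xi,u_k-e_{a_k}\rangle\geq\xi(z^*)-\|\xi\|_\infty>0$, contradicting the same-sign condition. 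Note that it is this scaling, not the decay $x^{a_m}\to 0$ you invoke, that neutralizes the $e_{a_k}$-terms: your idea handles the $x$ side, but for $\xi\in\ell_\infty\setminus c_0$ the coordinates $\xi^{a_m}$ need not tend to $0$, so domination via $\xi(z^*)>\|\xi\|_\infty$ is unavoidable. (Your auxiliary claims about the extended norm --- that $\vertiii{\xi}_{\mathrm{ext}}=1$ and that $B_X$ is weak$^*$ dense in the extended ball --- are unjustified as stated, but they are also not needed for the argument.)
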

\begin{proof}
We shall denote by $\vertiii{\cdot}$ the norm on $\ell_1$ which is dual to $\vertiii{\cdot}$ on $c_0$. Assume that $Y\subsetneqq X$ is a closed subspace of finite codimension and such that $Y^\bot\subset \NA(X)$; we aim to prove that the codimension of $Y$ in $X$ is equal to $1$. It follows from Theorem~R and Lemma~\ref{L:proxchar} that it is enough to prove the strict convexity of $X/Y$; we shall do so by contradiction.  

To that end, we can (by Fact~\ref{F:attsspan}) assume that we have a functional $\varphi\in S_{(X/Y)^*}$ such that $\att(\varphi)$ contains more than one point; set $x^*:=\varphi\circ q$. Then $x^*\in Y^\bot\subset\NA(X)$, and so there is $x\in \att(x^*)$. Fact~\ref{F:attsspan} gives that $\vertiii{x^*}=\|\varphi\|=1$ and $q(x)\in \att(\varphi)$. 

Let us take any point $u\in \att(\varphi)\setminus \{ q(x)\}$ and choose any sequence $(x_n)_{n=1}^\infty\subset u \subset X$ such that $\lim_{n\to\infty}\vertiii{x_n}=1$ (recall that $\|u\|=1$). Then $(x_n)_{n=1}^\infty$ is a bounded sequence and we can without loss of generality assume that there is $\tilde{x}\in \ell_\infty$ such that $x_n\stackrel{w^*}{\rightarrow} \tilde{x}$ in $\ell_\infty$. 

Now, 
$$2=\varphi(q(x)+q(x_n))=x^*(x+x_n)\leq \vertiii{x+x_n} \leq \vertiii{x}+\vertiii{x_n}\stackrel{n\to\infty}{\longrightarrow}2,$$
whence
$$\vertiii x + \vertiii{x_n} -\vertiii{x+x_n} \longrightarrow 0.$$
Since the norm $\vertiii{\cdot}$ is defined by a series of pseudonorms (which satisfy the triangle inequality), we in turn obtain for each $k\in\en$:
$$|\langle x, u_k-e_{a_k}\rangle|+|\langle x_n, u_k-e_{a_k}\rangle|-|\langle x+x_n, u_k - e_{a_k}\rangle |\stackrel{n\to\infty}{\longrightarrow} 0,$$
so, since $(x_n)$ converges to $\tilde{x}$ in weak$^*$ topology, we have that
\begin{equation}\label{E:samesign}
|\langle x, u_k-e_{a_k}\rangle|+|\langle \tilde{x}, u_k-e_{a_k}\rangle|-|\langle x+\tilde{x}, u_k - e_{a_k}\rangle |= 0.
\end{equation}
{\bf Claim:} There is $\lambda \geq 0$ such that $\tilde{x}=\lambda x$. In particular, $\tilde{x}\in X$ and $x_n\stackrel{w}{\rightarrow}\tilde{x}$.

Assume this is not the case. Then there is  $z^*\in c_{00}(\qe)\subset (\ell_1,\vertiii{\cdot})=X^*$ such that $z^*(x)<0<\tilde{x}(z^*)$, $|z^*(x)|>\|x\|_\infty$ and $\tilde{x}(z^*)>\|\tilde{x}\|_{\infty}$. 

Indeed, by the Hahn-Banach theorem we can find $z_1^*\in \ell_1$ separating the point $x$ and the weak$^*$-closed convex set $[0,\infty)\cdot \tilde{x}$, and $z_2^*\in\ell_1$ separating $\tilde{x}$ and the weak$^*$-closed convex set $[0,\infty)\cdot x$ with 
$$ z_1^*(x)<0\leq\tilde{x}(z_1^*), \quad \text{and} \quad z_2^*(x)\leq0 < \tilde{x}(z_2^*). $$
Setting $z_3^*:=z_1^*+z_2^*$, one can readily see that 
$$z_3^*(x) < 0 < \tilde{x}(z_3^*).$$
Using the density of $c_{00}(\qe)$ in $\ell_1$ we find a functional $z^*\in c_{00}(\qe)\subset\ell_1$ so close to $z_3^*$ that, again, $z^*(x)<0<\tilde{x}(z^*)$. Without loss of generality we can now assume that $|z^*(x)|>\|x\|_\infty$ and $\tilde{x}(z^*)> \|\tilde{x}\|_{\infty}$ because in the opposite case we would simply multiply $z^*$ by a sufficiently large rational constant.

Now, take $k\in\en$ such that $u_k=z^*$ (see Definition~\ref{defRead}); Then
\begin{align*}
\langle x, u_k-e_{a_k}\rangle &= z^* (x) - x^{a_k}\leq z^*(x) + \|x\|_\infty<0 \quad\text{ and}\\
\langle \tilde{x}, u_k- e_{a_k}\rangle &= \tilde{x}(z^*)-\tilde{x}^{a_k}\geq \tilde{x}(z^*)-\|\tilde{x}\|_\infty>0.
\end{align*}
But on the other hand it follows from \eqref{E:samesign} that for each natural $k$ we have 
$$\langle x, u_k - e_{a_k} \rangle \cdot \langle \tilde{x}, u_k-e_{a_k}\rangle \geq 0,$$
a contradiction which proves the Claim.

Finally, find a functional $\psi\in (X/Y)^*$ such that $\psi(q(x))<0<\psi(u)$ (we can do so e.g. using the same idea as with $z_3^*$ above in this proof). But then, setting $y^*:=\psi \circ q$, we have that 
$$y^*(\tilde{x})=\lambda y^*(x)=\lambda \psi(q(x))\leq 0<\psi(u)=y^*(x_n)$$
which contradicts the fact that $x_n\stackrel{w}{\to}\tilde{x}$, concluding the proof.
\end{proof}

\section{Non-$R(1)$ renormings}
The main result of the present section is Theorem~\ref{C:counterex}, which shows that the implication $\text{\bf (B)}\Longrightarrow\text{\bf (A)}$ (see the Introduction) can fail in all non-reflexive spaces if we take a suitable renorming; the purpose of all the preceding lemmas is to prove this theorem. At the end of this section we also prove a slightly stronger version of the key Lemma~\ref{T:Xgen} which holds for the space $\ell_1$.

\begin{lemma}\label{L:convint}
Let $x_1, x_2, \dots, x_n\in\er^d$ be such that $C:=\conv\{x_1, x_2, \dots, x_n\}$ has nonempty interior. Let $x=\sum_{i=1}^n \lambda_i x_i$ with $\sum_{i=1}^n \lambda_i=1$ and $\lambda_i >0$ for each $i$. Then $x\in\Int(C)$.
\end{lemma}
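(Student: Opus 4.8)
The plan is to use the single hypothesis $\Int(C)\neq\emptyset$ together with the strict positivity of all the coefficients $\lambda_i$, reducing the whole statement to one classical fact about convex sets: the open segment joining an interior point of a convex set to any point of that set lies entirely in the interior. In fact the affine dimension of $C$ never needs to be discussed; the strict positivity of the $\lambda_i$ is precisely what lets us ``push'' $x$ slightly towards a known interior point.

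First I would fix an arbitrary point $y_0\in\Int(C)$, which exists by assumption. Since $C=\conv\{x_1,\dots,x_n\}$ is compact, $y_0\in C$, and so it admits a representation $y_0=\sum_{i=1}^n \beta_i x_i$ with $\beta_i\ge 0$ and $\sum_{i=1}^n\beta_i=1$. The key step is then to exhibit $x$ as a genuine convex combination $x=s\,y_0+(1-s)\,w$ with $s\in(0,1)$ and $w\in C$. To do this, choose $s:=\min\{\tfrac12,\ \min_i \lambda_i\}$, which is strictly positive exactly because every $\lambda_i>0$; this is the only place the hypothesis on the $\lambda_i$ enters. Since $0\le\beta_i\le 1$, one has $\lambda_i-s\beta_i\ge\lambda_i-s\ge 0$ for every $i$, so setting $w:=\frac{1}{1-s}\sum_{i=1}^n(\lambda_i-s\beta_i)\,x_i$ defines a point of $C$: its coefficients are nonnegative and, because $\sum_i(\lambda_i-s\beta_i)=1-s$, they sum to $1$. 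By construction $x=s\,y_0+(1-s)\,w$ with $s\in(0,1)$.

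It remains to invoke the convexity fact, which is where the (entirely elementary) geometric content sits: if $y_0\in\Int(C)$ and $w\in C$, then $s\,y_0+(1-s)\,w\in\Int(C)$ for every $s\in(0,1]$. I would prove it in one line by choosing $r>0$ with $B(y_0,r)\subset C$ and observing that for any $v$ with $\|v\|<s r$ we may write $s\,y_0+(1-s)\,w+v=s\,(y_0+v/s)+(1-s)\,w$, a convex combination of the point $y_0+v/s\in B(y_0,r)\subset C$ and $w\in C$; hence $B\big(s\,y_0+(1-s)\,w,\ s r\big)\subset C$. Applying this with the combination found above yields $x\in\Int(C)$, as desired. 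The main (and essentially only) obstacle is this interior-segment lemma; once it is available, the argument is just the bookkeeping that keeps the convex coefficients of $w$ nonnegative.
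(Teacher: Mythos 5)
Your proof is correct, but it takes a genuinely different route from the paper's. The paper argues by contradiction via separation: assuming $x\in\partial C$, it takes a supporting functional $\varphi$ with $\sup_C\varphi=\varphi(x)$ (Hahn--Banach), notes that the hyperplane $\varphi^{-1}(\varphi(x))$ is nowhere dense so that some vertex $x_j$ must satisfy $\varphi(x_j)<\varphi(x)$ (otherwise $C$ would lie in the hyperplane, contradicting $\Int(C)\neq\emptyset$), and then the strict positivity of $\lambda_j$ yields the absurdity $\varphi(x)<\varphi(x)$. You instead argue directly and constructively: you use $\min_i\lambda_i>0$ to decompose $x=s\,y_0+(1-s)\,w$ with $y_0\in\Int(C)$, $w\in C$ and $s\in(0,1)$, and then invoke the interior-segment property of convex sets. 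Notably, that property is exactly the paper's Lemma~\ref{L:intbound} (in a slightly weaker and easier form, with $w\in C$ rather than $w\in\overline{C}$), so your argument effectively derives Lemma~\ref{L:convint} from Lemma~\ref{L:intbound}; since the paper proves both lemmas independently and only combines them in Proposition~\ref{P:sconvisconv}, your route tightens the logical structure a bit. What the paper's proof buys is brevity, at the cost of invoking separation and the topological fact that hyperplanes are nowhere dense; what yours buys is a purely elementary, separation-free argument in which the role of the hypothesis $\lambda_i>0$ is made completely explicit (it is what makes $s>0$), with only routine bookkeeping to keep the coefficients of $w$ nonnegative. Both arguments, as written, would work verbatim in any normed space, though the statement is only non-vacuous in finite dimensions.
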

\begin{proof}
Assume that $x\in\partial C$ and take any non-zero functional $\varphi\in (\er^d)^*$ such that $\sup_C\varphi=\varphi (x)$ (it exists due to the Hahn-Banach Theorem). The hyperplane $H:=\varphi^{-1}(\varphi(x))$ is nowhere dense, so there is a $j\in\{1,\dots, n\}$ such that $x_j\in C\setminus H$ (otherwise $C\subset H$ which is impossible). But $\lambda_j>0$, so $\lambda_j \varphi(x_j)<\lambda_j \varphi(x)$, whence 
$$\varphi(x)=\varphi\Big(\sum_{i=1}^n \lambda_i x_i\Big) = \sum_{i\neq j}\lambda_i\varphi(x_i)+\lambda_j\varphi(x_j)<\sum_{i\neq j}\lambda_i \varphi(x)+\lambda_j \varphi(x)=\varphi(x),$$
which is a contradiction.
\end{proof}

\begin{lemma}\label{L:intbound}
If $C$ is a convex set in a normed linear space $X$, $y\in \Int(C)$ and $z\in \overline{C}$, then $\lambda y + (1-\lambda)z\in \Int(C)$ for any $\lambda \in (0,1)$.
\end{lemma}
\begin{proof}
If $C,y,z$ and $\lambda$ are as above, we can find $r>0$ such that $U(y,r)\subset C$ and a point $x\in C\cap U(z,\frac{\lambda r}{1-\lambda})$. Then the ball $U:=\lambda U(y,r) + (1-\lambda) x = U(\lambda y + (1-\lambda)x, \lambda r)$ is contained in $C$ as it consists of convex combinations of points of $U(y, r)$ and $x$. But $\|\lambda y + (1-\lambda)z -(\lambda y + (1-\lambda)x)\|=(1-\lambda)\|z-x\|<\lambda r$, and so $\lambda y + (1-\lambda)z \in U$. 
\end{proof}

\begin{proposition}\label{P:sconvisconv}
Let $A\subset \er^d$ be bounded. Then $\sconv (A)=\conv(A)$.
\end{proposition}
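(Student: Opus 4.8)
The plan is to prove the nontrivial inclusion $\sconv(A)\subset\conv(A)$, the reverse being immediate since any finite convex combination is a $\sigma$-convex combination with all but finitely many coefficients equal to zero. I would fix $x=\sum_{i=1}^\infty\lambda_i x_i\in\sconv(A)$; boundedness of $A$ makes the series absolutely convergent, so $x$ is well defined. After deleting vanishing coefficients I may assume every $\lambda_i>0$, and if only finitely many terms survive then $x\in\conv(A)$ already, so I assume there are infinitely many. Writing $B=\{x_i\setsep i\in\en\}\subset A$, it suffices to show $x\in\conv(B)$.

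First I would reduce to a full-dimensional situation. Since $\sum_i\lambda_i=1$, translation does not affect the problem, so after translating I may assume $0\in B$ and hence that $W:=\operatorname{aff}(B)$ is a linear subspace; identifying $W\cong\er^m$ and working inside it, $\conv(B)$ now has nonempty interior, and $x\in W$ because $W$ is closed and $x$ is a limit of elements of $\conv(B)$. Next I would manufacture interior points by truncation: put $s_N=\sum_{i\le N}\lambda_i$ and $y_N=s_N^{-1}\sum_{i\le N}\lambda_i x_i\in\conv(B)$, so that $y_N\to x$. For all large $N$ the points $x_1,\dots,x_N$ affinely span $W$, hence $\conv\{x_1,\dots,x_N\}$ has nonempty interior; as the weights $\lambda_i/s_N$ are strictly positive, Lemma~\ref{L:convint} gives $y_N\in\Int\conv\{x_1,\dots,x_N\}\subset\Int\conv(B)$.

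The heart of the argument is to show that $x$ itself lies in $\Int\conv(B)$. From $y_N\to x$ we get $x\in\overline{\conv(B)}$. Suppose $x\notin\Int\conv(B)$. Then $x$ is a boundary point of the closed convex set $\overline{\conv(B)}$ (which has nonempty interior), so there is a supporting functional $\varphi\neq0$ with $\varphi(x)=\sup_{b\in B}\varphi(b)=:M$. Since $\varphi(x)=\sum_i\lambda_i\varphi(x_i)$ with each $\varphi(x_i)\le M$ and each $\lambda_i>0$, the equality $\sum_i\lambda_i\big(M-\varphi(x_i)\big)=0$ forces $\varphi(x_i)=M$ for every $i$; thus $B$ lies in the hyperplane $\{\varphi=M\}$, contradicting $\operatorname{aff}(B)=W$. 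Hence $x\in\Int\overline{\conv(B)}$.

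Finally I would close the gap using Lemma~\ref{L:intbound}. Choosing $N_0$ with $y_{N_0}\in\Int\conv(B)$ and exploiting that $x$ is interior to $\overline{\conv(B)}$, the point $z=x+\varepsilon(x-y_{N_0})$ lies in $\overline{\conv(B)}$ for small $\varepsilon>0$, while $x=\lambda y_{N_0}+(1-\lambda)z$ with $\lambda=\varepsilon/(1+\varepsilon)\in(0,1)$. Lemma~\ref{L:intbound} then yields $x\in\Int\conv(B)\subset\conv(B)\subset\conv(A)$, as desired. I expect the main obstacle to be precisely this boundary case: a priori $\conv(A)$ need not be closed, so $\sconv(A)$ might threaten to reach extra points of the closure, and the supporting-hyperplane computation above, carried out after restricting to the affine hull so that $B$ is genuinely full-dimensional, is what rules that out.
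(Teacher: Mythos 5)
Your proof is correct, and it shares the paper's skeleton: discard the zero weights, reduce to the case where the points $x_i$ affinely span the whole space (so that $\conv\{x_i\setsep i\in\en\}$ has nonempty interior), place the truncated averages $y_N$ in that interior via Lemma~\ref{L:convint}, and finish with Lemma~\ref{L:intbound}. It diverges, however, at the decisive step. The paper handles the tail directly: with $\lambda=\sum_{i\le N}\lambda_i$ it writes $x=\lambda y_N+(1-\lambda)z$, where $z=\frac{1}{1-\lambda}\sum_{i>N}\lambda_i x_i$ is the normalized tail, observes that $z\in\overline{\conv(B)}$, and applies Lemma~\ref{L:intbound} once; no separation argument is needed. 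You instead use only that $y_N\to x$, hence $x\in\overline{\conv(B)}$, then rule out $x\in\partial\,\overline{\conv(B)}$ with a supporting functional (re-running, for the infinite combination, exactly the positivity argument that proves Lemma~\ref{L:convint}), and finally recover $x\in\Int\conv(B)$ via the push-past point $z=x+\varepsilon(x-y_{N_0})$ and Lemma~\ref{L:intbound}. Your route is longer but somewhat more self-contained: it in effect establishes the infinite-weight analogue of Lemma~\ref{L:convint} together with the standard fact that $\Int(\overline{C})=\Int(C)$ for a convex body $C$, both of which the paper's tail decomposition sidesteps. One wording slip is worth fixing: the supposition should be $x\notin\Int\overline{\conv(B)}$ (equivalently $x\in\partial\,\overline{\conv(B)}$, since $x\in\overline{\conv(B)}$), not $x\notin\Int\conv(B)$; as literally written, the inference ``then $x$ is a boundary point of $\overline{\conv(B)}$'' presupposes $\Int\overline{\conv(B)}=\Int\conv(B)$, which at that stage you have not yet proved. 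Since the paragraph's stated conclusion ($x\in\Int\overline{\conv(B)}$) and your final step use only the corrected form, this is cosmetic rather than a gap.
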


\begin{proof}
Take any $\sigma$-convex combination $x=\sum_{i=1}^\infty \lambda_i x_i$ of elements of $A$. If there are only finitely many non-zero $\lambda_i$'s, then $x\in \conv(A)$ trivially. So we can clearly assume that $\lambda_i\neq 0$ for every $i$, and we aim to prove that $x\in \conv\{x_i\setsep i\in \en \}\subset\conv(A)$.

Now, any convex set in $\er^d$ is either contained in a hyperplane, or has a nonempty interior. Thus, setting $C:=\conv \{x_i\setsep i\in \en \}$, we can assume that $\Int (C)\neq\emptyset$. Take $N\in \en$ so large that the interior of $C_N:=\conv\{x_i\setsep i=1,\dots, N\}$ is nonempty and set $\lambda:=\sum_{i=1}^N \lambda_i$; then by Lemma~\ref{L:convint}, $y:=\frac{1}{\lambda}\sum_{i=1}^N\lambda_i x_i \in \Int(C_N)\subset \Int(C)$. 

The point $z:=\frac{1}{1-\lambda}\sum_{i=N+1}^\infty \lambda_i x_i$ is clearly an element of $\overline{C}$. Since $x=\lambda y + (1-\lambda)z$, the result follows from Lemma~\ref{L:intbound}.
\end{proof}

\begin{lemma} \label{L:lemma3}
Let $X$ be a Banach space and let for each $j,k\in \en$, $f_k^j\in B_X$. Let $(a_k)_{k=1}^\infty \subset \ell_1$ and $(b_k)_{k=1}^\infty \subset \ell_1$ be two sequences of elements of $\ell_1$ such that $\lim_{k\to \infty} \|a_k-b_k\|_1=0$. 
Then 
$$\lim_{k\to \infty}\sum_{j=1}^\infty a_k^j f_k^j = \lim_{k\to \infty}\sum_{j=1}^\infty b_k^j f_k^j$$
whenever one of the limits exists.
\end{lemma}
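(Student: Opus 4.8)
The plan is to reduce everything to a single estimate controlling the difference of the two sums for each fixed $k$, and then pass to the limit via the triangle inequality. First I would observe that, for each fixed $k$, the two series converge absolutely in $X$: since $f_k^j\in B_X$ and $a_k\in\ell_1$, we have $\sum_{j=1}^\infty \|a_k^j f_k^j\| \le \sum_{j=1}^\infty |a_k^j| = \|a_k\|_1 < \infty$, so by completeness of $X$ the element $s_k:=\sum_{j=1}^\infty a_k^j f_k^j$ is well defined, and likewise $t_k:=\sum_{j=1}^\infty b_k^j f_k^j$. This is what makes the statement meaningful in the first place, so I would record it explicitly.

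The key step is the termwise comparison. For each $k$, the triangle inequality together with the uniform bound $\|f_k^j\|\le 1$ gives
$$\Big\| s_k - t_k \Big\| = \Big\| \sum_{j=1}^\infty (a_k^j - b_k^j) f_k^j \Big\| \le \sum_{j=1}^\infty |a_k^j - b_k^j| = \|a_k - b_k\|_1.$$
By hypothesis the right-hand side tends to $0$ as $k\to\infty$, so $\|s_k - t_k\| \to 0$.

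Finally, suppose one of the limits exists, say $L:=\lim_{k\to\infty} s_k$, the other case being symmetric. Then $\|t_k - L\| \le \|t_k - s_k\| + \|s_k - L\| \to 0$, so $\lim_{k\to\infty} t_k = L$ as well, which is exactly the asserted equality.

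I expect no genuine obstacle here; the statement is in essence a uniform-continuity observation, packaged in this form for convenient reuse later. The only point deserving a word is the well-definedness of the individual sums $s_k$ and $t_k$, which rests on $X$ being complete (so that absolute convergence forces convergence) together with the uniform bound $\|f_k^j\|\le 1$ coming from $f_k^j\in B_X$; both ingredients are supplied directly by the hypotheses.
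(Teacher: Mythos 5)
Your proof is correct and follows essentially the same route as the paper: the same key estimate $\bigl\| \sum_{j=1}^\infty (a_k^j - b_k^j) f_k^j \bigr\| \leq \|a_k - b_k\|_1$, with absolute convergence justifying well-definedness. You merely spell out the concluding triangle-inequality step that the paper leaves implicit, which is fine.
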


\begin{proof}
Since $X$ is a Banach space, all the series in the statement, as well as the estimate below, converge because they converge absolutely. Hence, the following suffices to conclude the proof:
\begin{IEEEeqnarray*}{+rCl+x*}
& & \Big\| \sum_{j=1}^\infty a_k^j f_k^j   - \sum_{j=1}^\infty b_k^j f_k^j  \Big\| = \Big\| \sum_{j=1}^\infty \big(a_k^j-b_k^j\big)f_k^j \Big\| \\
 & \leq  & \sum_{j=1}^\infty \big|a_k^j-b_k^j \big| \cdot \big\|f_k^j\big\|\leq \sum_{j=1}^\infty \big|a_k^j - b_k^j \big|= \|a_k-b_k\|_1 \longrightarrow 0.
& \qedhere
\end{IEEEeqnarray*}
\end{proof}

The following lemma is the core result of the present section showing how to construct certain renormings of non-reflexive Banach spaces. The proof is relatively complicated from the technical point of view, but except for its use of the James's theorem \cite{Jam63} (stating that on a non-reflexive Banach space there always is a functional which does not attain its norm on the closed unit ball) it is elementary. The proof is divided into 20 paragraphs to make it easier to read and also to relate it to the proof of Proposition~\ref{P:ellonecase} which follows the same scheme. The main idea of the proof appears already in the second paragraph where we define the renorming by choosing a new unit ball for the space. The rest of the proof is to show that the image of this new unit ball via the quotient mapping is exactly what we want it to be (i.e. the set $A$). The difficult part is to show that the image contains no extra points, and that follows from the choice of renorming which uses the non-norm-attaining functional and the subsequent existence of a ``very non-convergent'' bounded sequence of points (a norming sequence for the functional contained in the unit sphere).

\begin{lemma} \label{T:Xgen}
Let $X$ be a non-reflexive Banach space, $d\in \en$, $Y\subset X$ be a closed subspace of codimension $d$ in $X$, and $q:X\to X/Y$ be the quotient mapping. Let $\|\cdot\|$ be an equivalent norm on $X/Y$ and let $A\subset X/Y$ be a symmetric convex $F_\sigma$-set satisfying $U_{(X/Y,\|\cdot\|)}\subset A \subset B_{(X/Y,\|\cdot\|)}$ for which there exists $v\in \partial A$ and $r\in(0,1)$ such that $A\setminus \big(U_{\|\cdot\|}(v,r)\cup U_{\|\cdot\|}(-v,r)\big)$ is closed.

Then $X$ admits an equivalent norm $\vertiii{\cdot}$ such that $q(B_{(X,\vertiii{\cdot})})=A$.
\end{lemma}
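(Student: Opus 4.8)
The plan is to \emph{build the new unit ball from the inside}. Identify $X/Y$ with a $d$-dimensional normed space; since $U_{\|\cdot\|}\subset A\subset B_{\|\cdot\|}$, the set $A$ is convex with $\overline A=B_{\|\cdot\|}$ and $\Int A=U_{\|\cdot\|}$, so $A$ differs from the closed ball only in sphere points, all of which (by the closedness hypothesis) lie within $\|\cdot\|$-distance $r$ of $\pm v$. As $A$ is a bounded $F_\sigma$-set I write $A=\bigcup_k C_k$ with $C_k$ compact and increasing, and (since $Y$ has finite codimension it is complemented) fix a decomposition $X=Y\oplus Z$. I then define the new unit ball as $B:=\overline{\sconv}\big(S\cup(-S)\big)$ for a suitable bounded symmetric $S\subset X$ with $q(S)=A$, and let $\vertiii\cdot$ be the Minkowski gauge of $B$; since $B$ is closed, convex, symmetric, bounded and contains $0$ in its interior, its closed unit ball is exactly $B$. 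Throwing a small ball $\varepsilon B_X$ into $S$, with $\varepsilon<1$ so small that $\varepsilon B_{X/Y}\subset U_{\|\cdot\|}$, guarantees $0\in\Int B$ (equivalence from above) while contributing nothing near the sphere, and boundedness of $S$ gives equivalence from below. Everything reduces to arranging $q(B)=A$.

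The non-reflexivity enters through a \emph{gadget}. Being a finite-codimensional subspace of the non-reflexive $X$, the space $Y$ is itself non-reflexive, so James's theorem \cite{Jam63} provides $g\in S_{Y^*}$ not attaining its norm on $B_Y$ together with a norming sequence $(w_n)\subset B_Y$, $g(w_n)\to1$, which therefore has \emph{no} convergent subsequence. The crucial point is that $w_n\in Y=\Ker q$, so adding $w_n$ to a lift does not change its $q$-image. I choose continuous bounded sections $T_k\colon C_k\to X$ with $q\circ T_k=\mathrm{id}$, deformed near $v$ by letting the lift of points approaching $\pm v$ be $T_0(p)\pm\theta(p)\,w_{n(p)}$, where $\theta(p)\to1$ and $n(p)\to\infty$ as $p\to v$. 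Put $S:=\varepsilon B_X\cup\bigcup_k T_k(C_k)$ (symmetrised). Since $q(w_n)=0$ we still have $q(T_k(C_k))=C_k$, hence $q(S)=\bigcup_k C_k=A$, which already yields the easy inclusion $A\subset q(S)\subset q(B)$.

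It remains to prove $q(B)\subset A$, i.e.\ that \emph{no extra points appear}. By continuity of $q$ (with $\|q\|=1$) and the fact that $\sigma$-convex series converge absolutely (cf.\ Lemma~\ref{L:lemma3}), together with $\sconv(q(S))=\conv(A)=A$ in the finite-dimensional quotient (Proposition~\ref{P:sconvisconv} and convexity of $A$), one gets $q(\sconv S)\subset A$ and hence $q(B)\subset\overline A=B_{\|\cdot\|}$. The only danger is that some $b=\lim_m c_m\in B$, with $c_m\in\sconv S$, has $q(b)=p$ a \emph{forbidden} sphere point $p\notin A$; by the closedness hypothesis such $p$ lies near $\pm v$. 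For each fixed $k$ the set $T_k(C_k)$ is compact, so combinations supported on it converge to genuine preimages whose images land in $C_k\subset A$; thus reaching $p$ forces the weight of $c_m$ to escape to arbitrarily large $k$, i.e.\ onto the deformed near-$v$ lifts. Projecting onto $Y$ along $Z$, the $Y$-component of $c_m$ is then a convex combination of the $w_n$ (up to a bounded remainder of vanishing weight) whose $g$-value tends to $1$; were $c_m$ convergent, this $Y$-component would converge to a point of $B_Y$ at which $g$ attains its norm, contradicting the choice of $g$. Hence no such $b$ exists, $p\notin q(B)$, and $q(B)=A$.

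I expect the \textbf{main obstacle} to be precisely this last inclusion, and within it the \emph{calibration of the gadget near $\pm v$}: the deformation must open $A$ up \emph{exactly} to its prescribed deficiency, so that every \emph{allowed} point (each lying in some $C_k$) stays reachable through a convergent, compactly supported combination, while every \emph{forbidden} point is made unreachable because any approximating combination is pinned to the divergent norming sequence of the non-attaining $g$. Making these two demands compatible for an arbitrary $F_\sigma$-set $A$, and carrying the $q$-image bookkeeping rigorously through the infinite ($\ell_1$-type) combinations, is where the hypotheses on $A$ and the technical Lemma~\ref{L:lemma3} and Proposition~\ref{P:sconvisconv} do the work; the conceptual heart remains James's theorem, used to manufacture a bounded sequence inside $Y$ whose convex combinations stay divergent.
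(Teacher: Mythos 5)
Your strategy is the paper's own: reduce to $X=Y\oplus_\infty\er^d$, use James's theorem to produce a non-attaining $g\in S_{Y^*}$ with a norming sequence $(w_n)\subset B_Y$ (which indeed has no convergent subsequence), take as new unit ball the closed convex hull of lifts of pieces of $A$ translated inside $Y=\Ker q$ by norming vectors, and exclude forbidden boundary points by pinning their approximations to the divergent sequence. The gaps are in the two places where the actual work lies. First, your gadget is miscalibrated. You require $n(p)\to\infty$ only as $p\to v$, but the forbidden set $\overline A\setminus A$ need not be $\{v,-v\}$: the hypotheses confine it only to $\overline{U_{\|\cdot\|}(v,r)}\cup\overline{U_{\|\cdot\|}(-v,r)}$, and it can even be dense in the sphere near $v$. (Take $\|\cdot\|$ Euclidean and $A=U_{\|\cdot\|}\cup D\cup\big(S_{\|\cdot\|}\setminus(U_{\|\cdot\|}(v,r)\cup U_{\|\cdot\|}(-v,r))\big)$ with $D$ a symmetric countable dense subset of $S_{\|\cdot\|}\cap(U_{\|\cdot\|}(v,r)\cup U_{\|\cdot\|}(-v,r))$; this is symmetric, convex, $F_\sigma$, and satisfies the closedness hypothesis.) For a forbidden $p^*\neq\pm v$ your specification lets $n$ stay bounded on $A$ near $p^*$; choosing allowed points $p_m\to p^*$ and a subsequence along which $n(p_m)$ is constant and $\theta(p_m)$ converges, the lifts $T_0(p_m)\pm\theta(p_m)w_{n(p_m)}$ converge to a point of the closed set $B$ lying over $p^*$, so $p^*\in q(B)$ and the conclusion fails. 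The cure is to index the deformation by the $F_\sigma$-decomposition rather than by distance to $v$: write $A\cap U_{\|\cdot\|}(v,r)=\bigcup_j F_j$ with $F_j$ closed convex and attach the whole piece $F_j$ to the $j$-th norming vector (the paper's $\{z^j\}\times F_j$). Then, since each finite union $\bigcup_{j\le J}F_j$ is a closed subset of $A$, every lift of a point near a forbidden point automatically carries a large index; moreover each generating piece remains compact, whereas your point-dependent $n(p)$ destroys compactness of $T_k(C_k)$, so even your step ``combinations supported on a fixed $T_k(C_k)$ converge to genuine preimages'' is no longer available after the deformation.

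Second, the escape argument has a hole. You pass from ``$q(b)$ is forbidden'' to ``the weight of $c_m$ escapes to the deformed lifts, hence the $g$-value of the $Y$-component tends to $1$'', but this skips exactly the configurations that occupy most of the paper's proof. (i) A combination can charge lifts near $+v$ (with $Y$-part roughly $+w_n$) and lifts near $-v$ (with $Y$-part roughly $-w_n$) with comparable weights; the $g$-values then cancel and no contradiction with non-attainment is possible. The paper disposes of this case (its $\nu>0$) not by contradiction but by showing the image is then interior: if $f,g\in U_{\|\cdot\|}(v,r)$ then $\|f-g\|<2r$, whence $\|\alpha f-\beta g\|\le(\alpha-\beta)+2r\beta=1-2\beta(1-r)<1$. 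This is the only point where the hypothesis $r<1$ enters, and your sketch never uses it. (ii) Only part of the mass may escape --- the rest sitting on $\varepsilon B_X$ or on lifts of bounded index, or the coefficients spreading so that the coordinatewise limits $\alpha^j=\lim_k\alpha^j_k$ sum to $\lambda<1$; then the $g$-value of the whole $Y$-component does \emph{not} tend to $1$, even when only one sign is charged. The paper's remedy is to split off the retained mass (convexity of $A$ together with $\sconv(A)=\conv(A)=A$, Proposition~\ref{P:sconvisconv}), and then to isolate the escaping part of the $Y$-component by the $(\cdot)^+$ truncation and the $\ell_1$-perturbation Lemma~\ref{L:lemma3}, showing that its normalization lies in $\bigcap_N\cconv\{z^j\setsep j>N\}$, which is empty precisely because the $z^j$ norm a non-attaining functional; the complementary case $\lambda=1$ is handled by commuting the limits, giving $u\in\sconv(A)=A$. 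In short, ``forbidden image implies total, one-signed escape'' is the conclusion of this entire case analysis, not an observation one may start from; as written, your argument assumes its hardest step.
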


\begin{proof}

\pfpar
Since $Y$ is a closed subspace of finite codimension $d$ in $X$, it is complemented in $X$, its (topological) complement is isomorphic to $\er^d$ as well as to $X/Y$, and $X\cong Y\oplus_\infty X/Y$ (see e.g. \cite[Section 4.1]{FHHMZ}). It follows that $X\cong Y\oplus_\infty \er^d$. Note also that $Y$ is non-reflexive as in the opposite case $X$ would be reflexive, being isomorphic to the direct sum of two reflexive spaces, which is easily seen to be reflexive. 

Our goal is to find a certain equivalent norm on $X$; we can therefore assume that $X = Y\oplus_\infty \er^d$ and consider the projection to the second coordinate $q:X\to \er^d$ instead of the quotient mapping $X\to X/Y$. This is, indeed, correct as the topological and linear structure remains the same, so we can also assume without loss of generality that $A\subset \er^d$ and that $\|\cdot\|$ is a norm on $\er^d$ (and it is the one satisfying $B_{(\er^d, \|\cdot\|)}=\overline{A}$). To avoid any confusion, let us emphasize that we shall use no other norm on $\er^d$ in this proof. 

\pfpar
Now, denote $F_0:=\conv\big(A\setminus (U(v,r)\cup U(-v,r))\big)$; then $F_0$ is a compact subset of $A$. Clearly, the set $A\cap U(v,r)$ is of the type $F_\sigma$, so there are closed sets $E_j$, $j\geq 1$, such that $A\cap U(v,r)=\bigcup_{j=1}^\infty E_j$. From the convexity of $A\cap U(v,r)$ it follows that $A\cap U(v,r)=\bigcup_{j=1}^\infty F_j$ where $F_j:=\conv(E_j)$; then $F_j$ is also closed for each $j$.  
 
To define an equivalent norm with the desired properties, we need to use the fact that $Y$ is non-reflexive. The James's theorem implies that there exists $y^*\in Y^*\setminus \NA (Y)$, and we can fix a sequence $\big(z^j\big)_{j=0}^\infty \subset S_Y$ such that $y^*\big(z^j\big)\nearrow\|y^*\|$ as $j\to\infty$.
The following set $B$ clearly is the closed unit ball for an equivalent norm $\vertiii{\cdot}$ on $X$.
\begin{equation*}
B:=\cconv\,\Big(B_Y\times \frac{1}{2}A\cup \bigcup_{j=0}^\infty \big(\big\{z^j\big\}\times F_j \cup \big\{-z^j\big\}\times (-F_j)\big) \Big)
\end{equation*}
Indeed, $B$ is closed absolutely convex with $B_Y\times\frac{1}{2}A\subset B \subset B_Y\times \overline{A}$.

\pfpar
We will be done when we prove that $q(B)=A$. The inclusion $q(B)\supset A$ is easy to see; indeed, $A$ is symmetric so $A=\bigcup_{j=0}^\infty (F_j\cup(-F_j))$, and obviously $\bigcup_{j=0}	^\infty (F_j\cup(-F_j))\subset q(B)$. It remains to prove that $q(B)\subset A$.

Note that the sequence $(z^j)_{j=0}^\infty$ does not converge; our task would therefore be easy if the definition of $B$ only involved closure instead of the closed convex hull. If that were the case, our argument would be based on the simple observation that a sequence of the form $(z^{j_k},f_k)_{k=1}^\infty$ with $f_k\in F_{j_k}$ converges only if the sequence of indices $(j_k)_{k=1}^\infty$ is eventually constant with $\lim_{k\to\infty}j_k=j_\infty$, whence $\lim_{k\to\infty} f_k \in F_{j_\infty}\subset A$.

The rest of the proof deals with the technical difficulties arising from the presence of convex combinations.

\pfpar
To that end, let us fix an arbitrary point $u\in q(B)$ and an $x\in Y$ such that $(x,u)\in B$. Then $(x,u)$ can be written as the limit of a sequence of convex combinations:
\begin{equation}\label{E:limconv}
\begin{aligned}
(x,u)&= \lim_{k\to\infty} \Big( t_k\Big(x_k,\frac{1}{2}v_k\Big)+\sum_{j=0}^{n_k}\big(\alpha^j_k\big(z^j, f^j_k\big) - \beta^j_k\big(z^j, g^j_k\big)\big)    \Big)\\
&= \lim_{k\to\infty} \Big( t_k x_k + \sum_{j=0}^{n_k} \big(\alpha^j_k z^j - \beta^j_k z^j\big), \frac{1}{2}t_k v_k + \sum_{j=0}^{n_k}\big(\alpha^j_k f^j_k - \beta^j_k g^j_k\big) \Big)
\end{aligned}
\end{equation}
where $n_k\in \en$, $t_k+\sum_{j=0}^{n_k}\big(\alpha^j_k+\beta^j_k\big)=1$ with all the coefficients non-negative, $v_k\in A$ and $f^j_k, g^j_k \in F_j$ for each $k\in \en$ and $j\in\en\cup\{0\}$. Note that all the sets $F_j$, as well as $A$, are convex, and so in the convex combinations it is enough to take only one element from each of them instead of a convex combination of finitely many elements. 

In paragraphs \pfparcite{5} to \pfparcite{14} of this proof we essentially make a series of simplifying assumptions which will be shown to cause no loss of generality; in this manner we shall reduce the matter enough to be able to formulate the final argument in the last six paragraphs. 

\pfpar
First, let us formally make all the sums in \eqref{E:limconv} infinite series by defining for each $k\in \en$ and each $j>n_k$,
\begin{equation} \label{E:formallyinf}
\alpha_k^j:=0, \quad \beta_k^j:=0, \quad f_k^j:=f_k^1 \quad \text{and} \quad g_k^j:=g_k^1.
\end{equation} 
Thus we now have
\begin{equation}\label{E:seriesdef}
(x,u)=\lim_{k\to\infty} \Big( t_k x_k + \sum_{j=0}^\infty \big(\alpha^j_k z^j - \beta^j_k z^j\big), \frac{1}{2}t_k v_k + \sum_{j=0}^\infty\big(\alpha^j_k f^j_k - \beta^j_k g^j_k\big) \Big)
\end{equation}
where each of the infinite series has only finitely many non-zero summands (for each $k\in\en$). In the following we shall often use this fact without recalling it.

\pfpar
By passing to a subsequence (using a diagonal argument) we can assume that all the following limits exist for all $j\in\en$ (recall that all the sequences $(f^j_k)_{k=1}^\infty$ and $(g^j_k)_{k=1}^\infty$ are bounded in $\er^d$):
\begin{equation} \label{E:limits}
\begin{split}
\alpha^j := \lim_{k\to\infty} \alpha^j_k, \quad \beta^j:=\lim_{k\to\infty} \beta^j_k, \quad t:= \lim_{k\to\infty} t_k, \\
f^j:= \lim_{k\to\infty} f^j_k \quad \text{and} \quad g^j:= \lim_{k\to\infty} g^j_k.
\end{split}
\end{equation}

\pfpar
We now distinguish two cases: Either $t=0$ or $t>0$. Assume first $t>0$; then we shall conclude that $u\in A$ as follows. We can find $k_0\in \en$ such that $t_k>\frac{t}{2}$ for each $k\geq k_0$. Hence, for each $k\geq k_0$ we have
\begin{equation*}
\begin{split}
\Big\| \frac{1}{2}t_k v_k + \sum_{j=0}^{\infty}\big(\alpha^j_k f^j_k - \beta^j_k g^j_k\big) \Big\| \leq \Big\|\frac{t_k}{2} v_k\Big\| + \sum_{j=0}^\infty \big\| \alpha^j_kf^j_k \big\| + \sum_{j=0}^\infty \big\| \beta^j_k g^j_k \big\| \\
\leq \frac{t_k}{2} + \sum_{j=0}^\infty \big(\alpha^j_k + \beta^j_k \big) = t_k + \sum_{j=0}^\infty \big(\alpha^j_k + \beta^j_k \big) - \frac{t_k}{2} = 1 - \frac{t_k}{2} < 1-\frac{t}{4}.
\end{split}
\end{equation*}
Hence,
$$u=\lim_{k\to\infty} \Big(\frac{1}{2}t_k v_k + \sum_{j=0}^{\infty}\big(\alpha^j_k f^j_k - \beta^j_k g^j_k\big)\Big) \in \Big(1-\frac{t}{4}\Big) B_{(\er^d,\|\cdot\|)}\subset U_{(\er^d,\|\cdot\|)} \subset A.$$

\pfpar
Till the end of the proof, we shall therefore investigate the case when $t=\lim_{k\to\infty}t_k=0$. It follows from \eqref{E:seriesdef} that 
$$u=\lim_{k\to\infty} (1-t_k)\sum_{j=1}^\infty \Big ( \frac{\alpha^j_k}{1-t_k} f^j_k - \frac{\beta^j_k}{1-t_k} g^j_k \Big ) = \lim_{k\to \infty}\sum_{j=1}^\infty \Big ( \frac{\alpha^j_k}{1-t_k} f^j_k - \frac{\beta^j_k}{1-t_k} g^j_k \Big ),$$
and similarly for the first coordinate $x$; the sum on the right is a convex combination for each $k\in \en$.
Hence, we can assume without loss of generality that $t_k=0$ for each $k$, so we have
\begin{IEEEeqnarray*}{c}
\sum_{j=0}^\infty\big(\alpha^j_k + \beta^j_k\big) = 1 \quad\text{for each $k$, and}\\
(x,u)=\lim_{k\to\infty} \Big(\sum_{j=0}^\infty\big(\alpha^j_k z^j - \beta^j_k z^j\big), \sum_{j=0}^\infty\big(\alpha^j_k f^j_k - \beta^j_k g^j_k\big)\Big).
\end{IEEEeqnarray*}

\pfpar 
Next assumption we want to make is $\alpha^0_k=\beta^0_k=0 $ for all $k\in \en$. In the present paragraph we show that this causes no loss of generality:

Let us first consider the case when $\alpha^0+\beta^0=1$; then 
$$\lim_{k\to\infty}\sum_{j=1}^\infty \big( \alpha^j_k + \beta^j_k \big)=\lim_{k\to\infty} \big(1-\big(\alpha^0_k+\beta^0_k\big)\big)=1-\big(\alpha^0+\beta^0\big)=0.$$
It is now easy to see (e.g. using Lemma~\ref{L:lemma3}) that $\lim_{k\to \infty} \sum_{j=1}^\infty \big(\alpha^j_k f^j_k - \beta^j_k g^j_k \big) = 0$, whence (by \eqref{E:seriesdef} and \eqref{E:limits}) $u=\alpha^0 f^0 + \beta^0 g^0\in F_0\subset A$.

Assume now that $\alpha^0+\beta^0<1$. Clearly we can also assume that for each $k\in \en$ we have $\alpha^0_k+ \beta^0_k < 1$, and consider the limit of convex combinations
\begin{equation}\label{E:limofconvexcomb}
 \tilde{u}:=\lim_{k\to \infty} \sum_{j=1}^\infty \Big( \frac{\alpha^j_k}{1-\big(\alpha^0_k+\beta^0_k\big)}f^j_k - \frac{\beta^j_k}{1-\big(\alpha^0_k+\beta^0_k\big)}g^j_k  \Big);
\end{equation}
it is easy to see that the limit exists. Then we have
\begin{IEEEeqnarray*}{rCl}
u & = & \big(\alpha^0+\beta^0\big) \Big( \frac{\alpha^0}{\alpha^0+\beta^0}f^0 - \frac{\beta^0}{\alpha^0+\beta^0}g^0\Big) \\ 
\yesnumber\label{E:udecomp}  && + \> \big(1-\big(\alpha^0+\beta^0\big)\big)\lim_{k\to\infty} \sum_{j=1}^\infty \Big( \frac{\alpha^j_k}{1-\big(\alpha^0+\beta^0\big)}f^j_k - \frac{\beta^j_k}{1-\big(\alpha^0+\beta^0\big)}g^j_k  \Big) \\
	& = & \big(\alpha^0+\beta^0\big) \Big( \frac{\alpha^0}{\alpha^0+\beta^0}f^0 - \frac{\beta^0}{\alpha^0+\beta^0}g^0\Big) + \big(1-\big(\alpha^0+\beta^0\big)\big) \tilde{u},
\end{IEEEeqnarray*}

where the last equality follows from Lemma~\ref{L:lemma3} whose assumption is verified as follows:
\begin{IEEEeqnarray*}{rCl}
& & \Big\| \Big(\frac{\alpha^j_k}{1-\big(\alpha^0+\beta^0 \big)} \Big)_{j=1}^\infty - \Big(\frac{\alpha^j_k}{1-\big(\alpha^0_k+\beta^0_k \big)} \Big)_{j=1}^\infty \Big\|_1 \\ 
& = & \Big| \frac{1}{1-\big(\alpha^0 + \beta^0 \big)}-\frac{1}{1-\big(\alpha^0_k + \beta^0_k \big)} \Big|\cdot \big\| \big(\alpha^j_k \big)_{j=1}^\infty \big\|_1 \longrightarrow 0, \quad k\to\infty,
\end{IEEEeqnarray*}
and similarly with $\beta^j_k$ instead of $\alpha^j_k$. 

In \eqref{E:udecomp} we expressed $u$ as a convex combination of an element of $F_0\subset A$ and $\tilde{u}$; therefore, to see that $u\in A$, it is sufficient to prove $\tilde{u}\in A$. Of course, $\tilde{u}$ is in $q(B)$ as it is the projection of $(\tilde{x}, \tilde{u})\in B$ where 
\begin{equation*}
\tilde{x} :=\lim_{k\to \infty} \sum_{j=1}^\infty \Big( \frac{\alpha^j_k}{1-\big(\alpha^0_k+\beta^0_k\big)}z^j - \frac{\beta^j_k}{1-\big(\alpha^0_k+\beta^0_k\big)}z^j  \Big).
\end{equation*}

To avoid introducing more notation, we henceforth assume that $\alpha^0_k=\beta^0_k=0 $ for all $k\in \en$, so we now have
\begin{equation*}
(x,u)=\lim_{k\to\infty} \Big( \sum_{j=1}^\infty \big( \alpha^j_k z^j- \beta^j_k z^j \big) , \sum_{j=1}^\infty \big( \alpha^j_k f^j_k - \beta^j_k g^j_k \big) \Big).
\end{equation*}

\pfpar
Next we distinguish two subcases depending on the value of 
$$\nu := \min\Big\{ \lim_{k\to\infty} \sum_{j=1}^\infty \alpha^j_k\,, \:\lim_{k\to\infty}\sum_{j=1}^\infty \beta^j_k\Big\};$$
by passing to a subsequence we can guarantee that both limits exist.

\pfpar
Let us first consider the case when $\nu >0$; we claim that then $u\in A$. To show that, set for $k\in \en$,  
$$\alpha_k:=\sum_{j=1}^\infty\alpha^j_k, \quad \beta_k:=\sum_{j=1}^\infty\beta^j_k, \quad f_k:=\frac{1}{\alpha_k}\sum_{j=1}^\infty\alpha^j_k f^j_k\quad\text{and} \quad g_k:=\frac{1}{\beta_k}\sum_{j=1}^\infty\beta^j_k g^j_k,$$
and find $k_0\in\en$ such that for each $k\geq k_0$, $\alpha_k, \beta_k\in \big(\frac{\nu}{2},1-\frac{\nu}{2} \big)$. Now, for a given $k\geq k_0$, let us assume for example $\alpha_k\geq \beta_k$; the opposite case is analogous. 

Recall that for $j\geq 1$,  $f^j_k, g^j_k\in F_j\subset U(v,r)$ which implies that $f_k, g_k\in U(v,r)$, and thus
$\|f_k-g_k\|< 2r$. We now have
\begin{IEEEeqnarray*}{rCl}
 & & \Big\| \sum_{j=1}^\infty\big(\alpha_k^j f_k^j - \beta_k^j g_k^j\big) \Big\| = \|\alpha_k f_k - \beta_k g_k \| = \|(\alpha_k-\beta_k)f_k + \beta_k (f_k-g_k)\| \\
 & \leq & \alpha_k-\beta_k + 2r\beta_k = 1- 2\beta_k + 2r\beta_k = 1-2\beta_k(1-r)<1-\nu(1-r)<1.
\end{IEEEeqnarray*}
This is true for any $k\geq k_0$, and consequently
$$u=\lim_{k\to\infty}\sum_{j=1}^\infty\big(\alpha_k^j f_k^j - \beta_k^j g_k^j\big) \in (1-\nu(1-r))\cdot B_{(\er^d,\|\cdot\|)} \subset U_{(\er^d,\|\cdot\|)} \subset A.$$

\pfpar
We now turn to the case $\nu=0$. The first step is to observe that we can assume without loss of generality that either $\alpha_k=0$ for all $k$, or $\beta_k=0$ for all $k$. Setting 
$$c_k:=\big(\alpha_k^1,\alpha_k^2,\dots\big)\quad \text{and}\quad d_k:=\big(\beta_k^1, \beta_k^2, \dots\big),$$
we see that either $\|c_k\|_1=\alpha_k\to 0$ or $\|d_k\|_1=\beta_k\to 0$. Let us examine only the second case as the first one is analogous. Then we have (e.g. by Lemma~\ref{L:lemma3}) 
$$\lim_{k\to\infty}\sum_{j=1}^\infty \beta_k^j g_k^j=0,$$
and therefore (recall that $\alpha_k+\beta_k=1$, whence $\alpha_k\to 1$)
$$\lim_{k\to\infty} \sum_{j=1}^\infty \frac{\alpha_k^j}{\alpha_k} f_k^j = \lim_{k\to\infty}\sum_{j=1}^\infty \alpha_k^j f_k^j =
\lim_{k\to\infty}\sum_{j=1}^\infty \big(\alpha_k^j f_k^j-\beta_k^j g_k^j\big)=u,$$
from where the observation follows. 

From now on, we assume that $\beta_k^j=0$ for all $j,k\in\en$, so 
\begin{equation}\label{E:xusimple}
(x,u)=\lim_{k\to\infty} \Big( \sum_{j=1}^\infty \alpha_k^j z^j\,,\: \sum_{j=1}^\infty \alpha_k^j f_k^j \Big).
\end{equation}

\pfpar
Again we need to distinguish two further subcases: This time depending on the value of 
\begin{equation*}
\lambda := \sum_{j=1}^\infty \alpha^j.
\end{equation*}

\pfpar
It is easy to see that $\lambda \in [0,1]$; the easier situation is when $\lambda=1$: In this case we need to observe that the limits in the definition of $u$ commute, that is:
\begin{equation}\label{E:commute}
u=\lim_{k\to\infty}\sum_{j=1}^\infty \alpha^j_k f^j_k =\sum_{j=1}^\infty \alpha^j f^j.
\end{equation}
Indeed, let $\varepsilon>0$ be given and let us find $j_0\in \en $ such that $\sum_{j=j_0+1}^\infty \alpha^j <\frac{\varepsilon}{4}$, and $k_0\in \en$ such that for each $k\geq k_0$ we have $\big\|\sum_{j=1}^{j_0}\big(\alpha^j_k f^j_k -\alpha^j f^j\big)\big\| < \frac{\varepsilon}{4}$ and $\big| \sum_{j=1}^{j_0}\big(\alpha^j_k - \alpha^j \big)\big|<\frac{\varepsilon}{4}$. Then for each $k\geq k_0$ we also obtain (using the assumption $\lambda=1$) that $\big| \sum_{j=j_0+1}^\infty \big(\alpha^j_k - \alpha^j\big)  \big|<\frac{\varepsilon}{4}$, and so
\begin{IEEEeqnarray*}{rCl}
\IEEEeqnarraymulticol{3}{l}{\Big\|\sum_{j=1}^\infty \alpha^j_k f^j_k - \sum_{j=1}^\infty \alpha^j f^j  \Big\|}\\
\qquad &\leq &  \Big\|\sum_{j=1}^{j_0}\big(\alpha^j_k f^j_k -\alpha^j f^j \big) \Big\|  + \Big\| \sum_{j=j_0+1}^\infty\alpha^j_k f^j_k\Big\| + \Big\| \sum_{j=j_0+1}^\infty \alpha^j f^j \Big\|   \\
&<   &\frac{\varepsilon}{4} + \sum_{j=j_0+1}^\infty \alpha^j_k + \sum_{j=j_0+1}^\infty \alpha^j < \frac{\varepsilon}{4} + \Big(\sum_{j=j_0+1}^\infty \alpha^j + \frac{\varepsilon}{4}\Big) + \frac{\varepsilon}{4}  <  \,\varepsilon.
\end{IEEEeqnarray*}
This proves the second equality in \eqref{E:commute}, which implies that $u\in \sconv(A)$ (note that $f^j, g^j\in F_j\subset A$ for every $j\in \en$) and it follows by Proposition~\ref{P:sconvisconv} that $u\in \conv(A)=A$.

\pfpar
Finally, assume that $\lambda\in [0,1)$; we will show that this assumption leads to a contradiction. Set
\begin{equation*}
x_\lambda:=\sum_{j=1}^\infty \alpha^j z^j \quad \text{and} \quad x_{(1-\lambda)}:=x-x_\lambda = \lim_{k\to\infty} \sum_{j=1}^\infty \big(\alpha_k^j-\alpha^j\big)z^j.
\end{equation*}
Now we aim to prove that $\frac{x_{(1-\lambda)}}{1-\lambda}$ is the limit of a sequence of convex combinations of $z^j$. But it need not suffice to simply divide all the coefficients $\alpha_k^j-\alpha^j$ by $1-\lambda$ as some of them could be negative. Hence, for any $a\in\er$ we define
$$a^+:=\max\{a, 0\},\quad  a^-:=(-a)^+, \quad\text{and}\quad \tilde{\gamma}_k^j:=\big(\alpha_k^j-\alpha^j\big)^+,$$
and we claim that 
$$x_{(1-\lambda)}=\lim_{k\to\infty} \sum_{j=1}^\infty \tilde{\gamma}_k^j z^j.$$

\pfpar
By Lemma~\ref{L:lemma3} it is enough to prove that 
$$\lim_{k\to\infty} \big\|\big(\alpha_k^j-\alpha^j\big)_{j=1}^\infty - \big(\tilde{\gamma}_k^j\big)_{j=1}^\infty\big\|_1=0.$$
Let an arbitrary $\varepsilon>0$ be given. We can find $N\in\en$  such that $\sum_{j=N+1}^\infty \alpha^j < \frac{\varepsilon}{2}$ and $k_0\in\en$ such that for each natural $k\geq k_0$ we have $\sum_{j=1}^N \big|\alpha_k^j-\alpha^j\big|<\frac{\varepsilon}{2}$. Note that $\big( \alpha_k^j - \alpha^j \big)^- \leq \alpha^j$ as $\alpha_k^j\geq 0$. Then for each $k\geq k_0$ we have
$$\big\|\big(\alpha_k^j-\alpha^j\big)_{j=1}^\infty - \big(\tilde{\gamma}_k^j\big)^\infty_{j=1}\big\|_1=\sum_{j=1}^\infty \big(\alpha_k^j-\alpha^j\big)^- \leq \sum_{j=1}^N \big| \alpha_k^j -\alpha^j \big|+\sum_{j=N+1}^\infty \alpha^j <\varepsilon.$$

\pfpar
Now, set $\sigma_k:=\sum_{j=1}^\infty \tilde{\gamma}_k^j$; then the last part together with the fact that $\sum_{j=1}^\infty \big(\alpha_k^j-\alpha^j \big)=1-\lambda$ also implies that $\lim_{k\to\infty}\sigma_k=1-\lambda$. Thus, setting $\gamma_k^j:=\frac{\tilde{\gamma}_k^j}{\sigma_k}$ we obtain
\begin{equation*}
x_{(1-\lambda)}=\lim_{k\to\infty} \frac{1-\lambda}{\sigma_k} \sum_{j=1}^\infty \tilde{\gamma}_k^j z^j = (1-\lambda) \lim_{k\to\infty} \sum_{j=1}^\infty \gamma_k^j z^j,
\end{equation*}
and from the definition it immediately follows that
$$\sum_{j=1}^\infty \gamma_k^j=1 \text{ for each $k$, and }\lim_{k\to\infty}\gamma_k^j=0 \text{ for each $j$.}$$
\pfpar
From these last facts we can readily see that 
\begin{equation}\label{E:intersect}
\frac{x_{(1-\lambda)}}{1-\lambda}=\lim_{k\to\infty}\sum_{j=1}^\infty \gamma_k^j z^j \in \bigcap_{N=1}^\infty \cconv\big\{z^j\setsep j > N\big\}.
\end{equation}
Indeed, pick arbitrary $N\in \en$ and $\varepsilon>0$, and find $k_0$ such that for each natural $k\geq k_0$,
$$ \gamma_k:=\sum_{j=1}^N \gamma_k^j  < \frac{\varepsilon}{2}. $$ 
Then for each $k\geq k_0$,
\begin{IEEEeqnarray*}{rCl}
\IEEEeqnarraymulticol{3}{l}{\dist\Big(\sum_{j=1}^\infty \gamma_k^j z^j, \cconv \big\{z^j\setsep j>N \big\} \Big)} \\
\qquad & \leq & \Big\| \sum_{j=1}^\infty \gamma_k^j z^j - \frac{1}{1-\gamma_k}\sum_{j=N+1}^\infty \gamma_k^j z^j \Big\|   \\
& \leq & \Big\| \sum_{j=1}^N \gamma_k^j z^j \Big\| + \Big\| \Big(1-\frac{1}{1-\gamma_k} \Big) \sum_{j=N+1}^\infty \gamma_k^j z^j \Big\| \\
& \leq & \gamma_k + \Big| 1-\frac{1}{1-\gamma_k} \Big|(1-\gamma_k)=\gamma_k+\gamma_k  < \varepsilon.
\end{IEEEeqnarray*}
This estimate yields that $\frac{x_{(1-\lambda)}}{1-\lambda} \in \cconv \big\{ z^j \setsep j>N \big\}$.

\pfpar
But this is impossible since the intersection in \eqref{E:intersect} is empty as we will show below. On the other hand, the limit in the definition \eqref{E:xusimple} of $x$ converges by the assumption and since $x_\lambda$ is a well-defined element of the Banach space $Y$, the limit defining $x_{(1-\lambda)}$ necessarily converges as well. This is a contradiction showing that the case $\lambda\in[0,1)$ does not occur; since in all the other cases we have already shown that $u\in A$, the proof will be complete, once we make the observation that
$$ C:=\bigcap_{N=1}^\infty \cconv\big\{z^j\setsep j > N\big\} = \emptyset.$$

\pfpar
Recall that $\big (z^j \big)_{j=1}^\infty \subset S_Y$ was chosen to be a norming sequence of the non-norm-attaining functional $y^*$. If there existed a point $z\in C$, then $z\in B_Y$ and $y^*(z)>y^*\big(z^N\big)$ for each $N\in\en$. Hence, $y^*(z)=\|y^*\|$ which is a contradiction, and the proof is complete.
\end{proof}

\noindent As a corollary we obtain the following result.

\begin{theorem}\label{C:counterex}
Let $X$ be a non-reflexive Banach space and $Y\subset X$ a closed subspace of finite codimension $d\geq 2$. Then $X$ admits an equivalent norm such that $Y^\bot \subset\NA(X)$ and $Y$ is not proximinal.
\end{theorem}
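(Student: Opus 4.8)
The plan is to derive the theorem directly from the construction Lemma~\ref{T:Xgen} together with the quotient characterisations in Lemma~\ref{L:proxchar}, reducing everything to a finite-dimensional geometric choice. Following the first paragraph of the proof of Lemma~\ref{T:Xgen}, I would assume $X=Y\oplus_\infty\er^d$ and take $q$ to be the projection onto $\er^d$, so the task is to choose an equivalent norm $\|\cdot\|$ on $\er^d$ with symmetric convex unit ball $K:=B_{(\er^d,\|\cdot\|)}$ and a symmetric convex $F_\sigma$-set $A$ with $U_{\|\cdot\|}\subset A\subset K$ satisfying the ``local closedness'' hypothesis of Lemma~\ref{T:Xgen}. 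Once Lemma~\ref{T:Xgen} produces a norm $\vertiii{\cdot}$ on $X$ with $q(B_{(X,\vertiii{\cdot})})=A$, one has $\overline A=K$, so the induced quotient norm on $X/Y$ is exactly $\|\cdot\|$ and $\|\varphi\|=\max_K\varphi$ for $\varphi\in(\er^d)^*$. Then Lemma~\ref{L:proxchar}~\emph{(i)} says $Y$ is \emph{not} proximinal precisely when $q(B_X)=A\neq K$, i.e.\ when $A$ is not closed; and Lemma~\ref{L:proxchar}~\emph{(ii)} says $Y^\bot\subset\NA(X)$ precisely when, for every $\varphi$, the maximiser set $F_\varphi:=\{u\in K\setsep \varphi(u)=\max_K\varphi\}$ meets $A$. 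So I need a symmetric convex body $K$ and a \emph{non-closed} symmetric convex set $A$ with $\Int K\subset A\subset K$ such that every $F_\varphi$ intersects $A$.

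The guiding observation is that an exposed extreme point $p$ of $K$ \emph{must} lie in $A$, since its maximiser set is $\{p\}$, whereas a \emph{non-exposed} extreme point may safely be deleted: removing extreme points preserves convexity, and no functional is ``trapped'' at a non-exposed point. Thus it suffices to produce, for each $d\ge 2$, a symmetric convex body possessing a non-exposed extreme point. I would take
$$K:=\conv\big(B_2^d\cup\{2e_1,-2e_1\}\big),$$
where $B_2^d$ is the Euclidean unit ball and $e_1,e_2$ are canonical vectors; this is exactly where $d\ge 2$ enters, as $d=1$ yields only the interval $[-2,2]$. Let $w:=\tfrac12 e_1+\tfrac{\sqrt3}{2}e_2$, the point of the Euclidean sphere at which the cone from $2e_1$ is tangent to $B_2^d$ (the tangency condition $\langle 2e_1-w,w\rangle=0$ forces the first coordinate $\tfrac12$). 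A short computation shows the radial functional $\varphi_w:=\langle w,\cdot\rangle$ satisfies $\varphi_w\le 1$ on $K$ with equality exactly on the segment $[w,2e_1]$, so $F_{\varphi_w}=[w,2e_1]$. Hence $w$ is an extreme point of $K$ (an endpoint of the face $[w,2e_1]$, with $\varphi_w$ its only supporting functional since the boundary is $C^1$ there) which is \emph{not} exposed; by central symmetry the same holds for $-w$. I then set $A:=K\setminus\{w,-w\}$.

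It remains to verify the hypotheses and conclusions. Since $w,-w$ are extreme points of $K$, no segment of $K$ runs through them, so $A$ is convex; it is symmetric, it is $F_\sigma$ (a closed set minus two points), and $\Int A=\Int K$, whence $U_{\|\cdot\|}\subset A\subset K=\overline A$. The local-closedness hypothesis of Lemma~\ref{T:Xgen} holds with $v=w$ and any $r\in(0,1)$, because both deleted points lie in $U_{\|\cdot\|}(w,r)\cup U_{\|\cdot\|}(-w,r)$, so $A\setminus(U_{\|\cdot\|}(w,r)\cup U_{\|\cdot\|}(-w,r))=K\setminus(U_{\|\cdot\|}(w,r)\cup U_{\|\cdot\|}(-w,r))$ is closed. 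As $w\in K\setminus A$ is a limit of points of $\Int K\subset A$, the set $A$ is not closed, so Lemma~\ref{L:proxchar}~\emph{(i)} gives that $Y$ is not proximinal. For the other requirement I must check $F_\varphi\cap A\neq\emptyset$ for every $\varphi$; this can fail only if $F_\varphi\subset\{w,-w\}$. The case $F_\varphi=\{w,-w\}$ is impossible for $\varphi\neq0$ (it would force $\varphi(w)=\varphi(-w)$, i.e.\ $\max_K\varphi=0$), while $F_\varphi=\{w\}$ or $\{-w\}$ would say $w$ or $-w$ is exposed, contradicting the previous paragraph; for $\varphi=0$ the claim is trivial. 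Thus Lemma~\ref{L:proxchar}~\emph{(ii)} yields $Y^\bot\subset\NA(X)$, completing the argument.

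The main obstacle, and the real content, is this last verification: arranging $A$ to be non-closed while keeping every functional's maximiser inside $A$. This is precisely the tension resolved by deleting a \emph{non-exposed} extreme point — it can be removed without destroying convexity (being extreme) and without orphaning any functional (being non-exposed) — and it is also exactly why the phenomenon cannot occur in codimension one, where the unit ball of $\er$ has only its two exposed endpoints.
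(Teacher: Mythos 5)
Your proposal is correct and follows essentially the same route as the paper: apply Lemma~\ref{T:Xgen} to a symmetric convex body in $\er^d$ from which two antipodal \emph{non-exposed} extreme points have been deleted, then obtain non-proximinality from Lemma~\ref{L:proxchar}~\emph{(i)} (since $q(B_X)=A$ is not closed) and $Y^\bot\subset\NA(X)$ from Lemma~\ref{L:proxchar}~\emph{(ii)} (since the unique supporting functional at a deleted point still attains its maximum elsewhere in $A$). The only difference is the choice of body, which is cosmetic: the paper deletes $\pm(e_1+e_2)$ from the ``stadium'' $\conv\big(B_{\|\cdot\|_2}(e_1,1)\cup B_{\|\cdot\|_2}(-e_1,1)\big)$, where the unique tangent hyperplane $\{x_2=1\}$ re-attains at $e_2$, whereas you delete the tangency points $\pm w$ from $\conv\big(B_{\|\cdot\|_2}(0,1)\cup\{2e_1,-2e_1\}\big)$, where the unique tangent hyperplane re-attains at $2e_1$.
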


\begin{proof}
As explained in \pfparcite{1} of the proof of Lemma~\ref{T:Xgen}, we can assume that $X/Y$ is isomorphic to $\er^d$ where $d$ is the codimension of $Y$ in $X$. Let $\|\cdot\|_2$ be the Euclidean norm on $\er^d$ and let $e_i$, $i=1,\dots,d$, be the canonical basis in $\er^d$. Put
$$A:=\conv\big(B_{\|\cdot\|_2}(e_1,1)\cup B_{\|\cdot\|_2}(-e_1,1)\big)\setminus \{e_1+e_2, -e_1-e_2\}.$$
Clearly $A$ is a symmetric convex $F_\sigma$ set with nonempty interior satisfying the condition from Lemma~\ref{T:Xgen}, so there is a renorming $\vertiii{\cdot}$ of $X$ such that $q(B_X)=A$ where $q:X\to \er^d$ is the quotient mapping; then $\overline{A}$ is the closed unit ball in $\er^d\cong X/Y$ with the corresponding norm. In the sequel, by $X$ we mean $(X,\vertiii{\cdot})$. 

Since $q(B_X)=A\neq B_{X/Y}$, Lemma~\ref{L:proxchar} yields that $Y$ is not proximinal in $X$. 

To prove that $Y^\bot\subset \NA(X)$ we will use Lemma~\ref{L:proxchar} again. Take any $\varphi\in (X/Y)^*$; we are to show that it attains its norm at a point of $q(B_X)$. Since $\dim(X/Y)<\infty$, there is a point $u\in\att(\varphi)$. If $u\notin\{e_1+e_2, -e_1-e_2\}$, we are done. Suppose $u= e_1+e_2$; of course, the case $u=-e_1-e_2$ is symmetrical. Further, we can clearly assume $\|\varphi\|=1$, so $\varphi(u)=1$. 

Consider the hyperplane $H:=\{(x_1,\dots,x_n)\in\er^d\setsep x_2=1\}$; obviously $H$ is a tangent hyperplane to $\overline{A}$ at the point $e_1+e_2$ (i.e. $H\cap \Int(\overline{A})=\emptyset$ and $e_1+e_2\in H\cap \overline{A}$). But there is only one such hyperplane because $\overline{A}\supset B_{\|\cdot\|_2}(e_1,1)$ and $e_1+e_2$ lies on the boundary of $B_{\|\cdot\|_2}(e_1,1)$. It follows that $H=\varphi^{-1}(1)$. But $e_2\in H\cap q(B_X)$ which concludes the proof.
\end{proof}

\begin{proposition}\label{P:ellonecase}
Let $Y\subset\ell_1$ be a closed subspace of codimension $d\in \en$, and $q:\ell_1\to \ell_1/Y$ be the quotient mapping. Let $\|\cdot\|$ be an equivalent norm on $\ell_1/Y$ and let $A\subset \ell_1/Y$ be a symmetric convex $F_\sigma$-set satisfying $U_{(\ell_1/Y,\|\cdot\|)}\subset A\subset B_{(\ell_1/Y,\|\cdot\|)}$. Then there is an equivalent norm $\vertiii{\cdot}$ on $\ell_1$ such that $q(B_{(\ell_1, \vertiii{\cdot})})=A$.
\end{proposition}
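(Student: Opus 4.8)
The plan is to follow the same overall scheme as the proof of Lemma~\ref{T:Xgen}, but to exploit the special geometry of $\ell_1$ to remove the extra hypothesis appearing there (namely, the existence of $v\in\partial A$ and $r\in(0,1)$ making $A\setminus\big(U_{\|\cdot\|}(v,r)\cup U_{\|\cdot\|}(-v,r)\big)$ closed). As in \pfparcite{1} of the proof of Lemma~\ref{T:Xgen}, I would first reduce to a concrete decomposition: since $Y$ has finite codimension $d$ in $\ell_1$, it is complemented and $\ell_1\cong Y\oplus_\infty\er^d$, with $Y$ itself non-reflexive and (crucially) isomorphic to $\ell_1$. So I may assume $X=Y\oplus_\infty\er^d$, that $q$ is the projection onto the second coordinate, and that $A\subset\er^d$ with $\|\cdot\|$ the norm whose unit ball is $\overline{A}$.

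Next I would define the new unit ball. Since $A$ is an $F_\sigma$, write $A=\bigcup_{j=0}^\infty F_j$ where each $F_j$ is a closed (convex, by taking convex hulls as in \pfparcite{2}) subset of $A$; symmetry lets me arrange $A=\bigcup_j(F_j\cup(-F_j))$. Invoking James's theorem on $Y$, fix $y^*\in Y^*\setminus\NA(Y)$ and a norming sequence $(z^j)_{j=0}^\infty\subset S_Y$ with $y^*(z^j)\nearrow\|y^*\|$, and set
\begin{equation*}
B:=\cconv\,\Big(B_Y\times\tfrac12 A\cup\bigcup_{j=0}^\infty\big(\{z^j\}\times F_j\cup\{-z^j\}\times(-F_j)\big)\Big),
\end{equation*}
which is the unit ball of an equivalent norm $\vertiii{\cdot}$. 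The inclusion $A\subset q(B)$ is immediate; the content is $q(B)\subset A$. I would take $u\in q(B)$, write $(x,u)$ as a limit of convex combinations exactly as in \eqref{E:limconv}, pass to subsequences so all coefficient limits $\alpha^j,\beta^j,t,f^j,g^j$ exist, and run through the same case analysis ($t>0$; then $t=0$; then reducing away the $j=0$ term; then $\nu>0$ versus $\nu=0$; then $\lambda=1$ versus $\lambda<1$). Every step dispatching $u\in A$ in the cases $t>0$, $\nu>0$, and $\lambda=1$ is identical to Lemma~\ref{T:Xgen} and uses only convexity of $A$ together with Proposition~\ref{P:sconvisconv}, so these transfer verbatim.

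The one place where Lemma~\ref{T:Xgen} used the special hypothesis on $A$ was in the subcase $\nu>0$, where the bound $\|f_k-g_k\|<2r$ came from $f^j_k,g^j_k\in F_j\subset U(v,r)$. The main obstacle, and the only genuinely new work, is to handle this subcase without that hypothesis; here I would use the $\ell_1$-structure of $Y$. The key point is that in $\ell_1$ a norming sequence $(z^j)$ for a non-norm-attaining functional can be chosen to be, after perturbation, essentially disjointly supported (a standard gliding-hump argument), so that the sequence $(z^j)$ behaves like the unit vector basis; this is exactly what makes the intersection $C=\bigcap_{N}\cconv\{z^j: j>N\}$ empty and, more importantly, lets me control the $\nu>0$ case directly. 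Concretely, in the $\nu>0$ case I expect to show that the $Y$-coordinate $x$ forces enough cancellation among the $z^j$ to again place $u$ strictly inside $B_{(\er^d,\|\cdot\|)}\subset A$, so that the estimate no longer needs the geometric separation of $A$ near $\pm v$. The final contradiction in the case $\lambda\in[0,1)$ is then obtained precisely as in \pfparcite{18}–\pfparcite{20}: the limit $\frac{x_{(1-\lambda)}}{1-\lambda}$ would lie in the empty set $C$, which is impossible because $y^*$ does not attain its norm. This completes $q(B)\subset A$ and hence $q(B)=A$.
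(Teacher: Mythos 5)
Your plan has a genuine gap, and it is not located where you expect it. You build $B$ from an \emph{arbitrary} decomposition $A=\bigcup_j\big(F_j\cup(-F_j)\big)$ into closed convex pieces and hope to rescue the case $\nu>0$ by showing that convergence (``cancellation'') of the first coordinates forces $u$ strictly inside $A$. No such argument can exist, because with an arbitrary decomposition the set $B$ you define simply fails to satisfy $q(B)=A$. Concretely, let $d=2$, let $U$ denote the open Euclidean unit ball of $\er^2$, and take
\[
A=U\cup\{(1,0),(-1,0)\},
\]
which is convex, symmetric, $F_\sigma$, and satisfies $U\subset A\subset \overline{U}$. Decompose $A=\bigcup_{j\ge 1}\big(F_j\cup(-F_j)\big)$ with $F_1=\{(1,0)\}$ and $F_j=B_{\|\cdot\|_2}\big(0,1-\tfrac1j\big)$ for $j\ge 2$. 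Then, \emph{whatever} unit vectors $z^j\in S_Y$ you use (a James norming sequence, disjointly supported vectors, anything), for $j\ge2$ put $f=(0,1-\tfrac1j)\in F_j$ and $g=-f\in F_j$; the convex combination
\[
\tfrac12\big(z^j,f\big)+\tfrac12\big(-z^j,-g\big)=\Big(0,\big(0,1-\tfrac1j\big)\Big)
\]
(the first $0$ being the origin of $Y$) lies in $B$, and since $B$ is closed, $\big(0,(0,1)\big)\in B$. Hence $(0,1)\in q(B)\setminus A$. Note that here $\nu=\tfrac12$ and the first coordinates are identically zero: perfect cancellation with disjointly supported $z^j$ is exactly what \emph{allows} $u$ to escape to $\overline{A}\setminus A$, rather than preventing it. This is precisely the role of the hypothesis on $v$ and $r$ in Lemma~\ref{T:Xgen}: it forces all pieces $F_j$, $j\ge1$, into a ball of radius $r<1$, so that two points paired from the same piece are uniformly close.

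The correct replacement of that hypothesis in the $\ell_1$ setting is a property of the \emph{decomposition}, not of $A$: since one is free to choose the pieces, cover $A$ by closed convex sets of diameter less than $1$ (intersect the closed sets of an $F_\sigma$ decomposition of $A$ with countably many closed balls of radius $\tfrac14$ and take convex hulls; these stay inside $A$ by convexity). The case analysis is then run not on $\nu$ but on $\mu:=\lim_k\sum_j\min\{\alpha_k^j,\beta_k^j\}$, which measures the total weight of pairs drawn from the \emph{same} piece. If $\mu>0$, the termwise estimate $\|\alpha_k^jf_k^j-\beta_k^jg_k^j\|\le \alpha_k^j+\beta_k^j-\min\{\alpha_k^j,\beta_k^j\}$ — valid exactly because $\diam(F_j)<1$ and $F_j\subset B_{(\er^d,\|\cdot\|)}$ — sums to $\|u\|\le 1-\mu<1$, so $u\in A$. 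If $\mu=0$, one may assume the families $(\alpha_k^j)_j$ and $(\beta_k^j)_j$ have disjoint supports, and then the contradiction in the remaining case $\lambda<1$ requires no James's theorem at all: taking $(z^j)$ to be the canonical basis of $Y=\ell_1$, disjointness makes the $\ell_1$-norm additive, so the tail part $x-x_\lambda$ of the first coordinate is a norm limit of vectors whose norms tend to $1-\lambda>0$ but whose coordinates tend to $0$, i.e.\ it is simultaneously nonzero and zero. Thus the $\ell_1$ structure genuinely enters in two places — the freedom to choose small-diameter pieces and norm additivity over disjoint supports — and both your gliding-hump norming sequence and the $\nu$-based case split should be discarded in favour of these.
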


\setcounter{pfparcount}{0}
\begin{proof}
The proof is very similar to that of Lemma~\ref{T:Xgen}, and so we will do it more briefly, emphasizing the necessary changes in each paragraph of the proof.

\pfpar 
Denote $X:=\ell_1$. The subspace $Y\subset X$ is isomorphic to $Z:= \big \{\big(x^j \big)_{j=1}^\infty \in\ell_1 \setsep  x^1=x^2=\dots=x^d = 0 \big\}$ as all subspaces of the same finite codimension in $X$ are isomorphic. But $Z$ is obviously (isometrically) isomorphic to $\ell_1$, and so we can (similarly as in the proof of Lemma~\ref{T:Xgen}) assume that $X=Y\oplus_\infty \er^d $ where $Y=\ell_1$. Throughout the proof we shall only consider the norm $\|\cdot\|$ on $\er^d$ for which $B_{(\er^d,\|\cdot\|)} = \overline{A}$.

\pfpar 
In a similar (but simpler) way as in \ref{T:Xgen} we can find closed convex sets $F_j$ ($j\in\en$) of diameter less than $1$ and such that $A=\bigcup_{j=1}^\infty F_j$. 

Now, instead of using the James's theorem, we take the canonical basis $(e^j)_{j=1}^\infty$ of $Y=\ell_1$ and we define
$$B:=\cconv\,\Big(B_Y\times \frac{1}{2}A\cup \bigcup_{j=1}^\infty \big(\big\{e^j\big\}\times F_j \cup \big\{-e^j\big\}\times (-F_j)\big) \Big).$$ 
Again, $B$ is the closed unit ball for an equivalent norm $\vertiii{\cdot}$ on $X$. 

\pfpar 
We want to prove that $q(B)=A$; the inclusion $q(B)\supset A$ follows immediately from the definition of $B$ and the fact that $A=\bigcup_{j=1}^\infty F_j$. 

\pfpar
To prove the converse inclusion, we again take arbitrary $u\in q(B)$ and $x\in Y$ such that $(x,u)\in B$. As in \ref{T:Xgen},
\begin{equation}\label{E:xudefellcase}
(x,u) = \lim_{k\to\infty} \Big( t_k x_k + \sum_{j=1}^{n_k} \big(\alpha^j_k e^j - \beta^j_k e^j\big), \frac{1}{2}t_k v_k + \sum_{j=1}^{n_k}\big(\alpha^j_k f^j_k - \beta^j_k g^j_k\big) \Big)
\end{equation}
where $n_k\in\en$, $t_k+\sum_{j=1}^{n_k}\big(\alpha^j_k+\beta^j_k\big)=1$ with $\alpha_k^j, \beta_k^j\geq 0$, $v_k\in A$ and $f_k^j, g_k^j\in F_j$.

\pfparref, \pfpar
Setting $\alpha^j_k$, $\beta^j_k$, $f^j_k$ and $g^j_k$ for $k\in \en$ and $j>n_k$ as in \eqref{E:formallyinf}, we can assume again that all the limits in \eqref{E:limits} exist. As in \ref{T:Xgen}, we now formally consider all the sums in \eqref{E:xudefellcase} as infinite series.

\pfparref, \pfpar
In the same way as in \ref{T:Xgen} we prove that we can assume all $t_k$'s to be zero (otherwise $u\in\Int(A)$). 

\pfpar This step of the proof we skip entirely because in this case we do not work with the set $F_0$.

\pfpar
Here we make one of the important changes by defining
$$\mu := \lim_{k\to\infty} \sum_{j=1}^\infty 	\min \big\{ \alpha_k^j, \beta_k^j \big\};$$
of course, we can assume that this limit exists, and we distinguish two cases: $\mu>0$ and $\mu=0$. 

\pfpar 
Suppose $\mu>0$. For $k\in \en$ set 
\begin{equation*}
u_k:=\sum_{j=1}^\infty \big(\alpha_k^j f_k^j - \beta_k^j g_k^j \big); 
\end{equation*}
then $u=\lim_{k\to\infty}u_k$. Further, for all $j,k\in \en$ set $\mu_k^j:=\min\big\{\alpha_k^j, \beta_k^j \big\}$. Fix $k\in \en$ and for a given $j\in\en$ assume that $\alpha_k^j\geq \beta_k^j$; the opposite case is analogous and we obtain the same estimate:
\begin{IEEEeqnarray*}{rCl}
\big\| \alpha_k^j f_k^j - \beta_k^j g_k^j \big\| &=& \big \| \big (\alpha_k^j-\mu_k^j \big)f_k^j + \mu_k^j \big( f_k^j-g_k^j \big) \big\| \\
&\leq& \big(\alpha_k^j - \mu_k^j\big) + \mu_k^j \\
&=& \alpha_k^j + \beta_k^j - \mu_k^j;
\end{IEEEeqnarray*}
here the inequality follows from the fact that $f_k^j, g_k^j\in F_j\subset B_{\er^d}$ and $\diam(F_j)<1$. Now, by taking the sum over all $j\in\en$ we obtain
$$\|u_k\| \leq \sum_{j=1}^\infty \big\| \alpha_k^j f_k^j - \beta_k^j g_k^j \big\| \leq \sum_{j=1}^\infty \big(\alpha_k^j + \beta_k^j - \mu_k^j \big) = 1 - \sum_{j=1}^\infty \mu_k^j.$$
It follows after passage to limit as $k\to\infty$ that $\|u\|\leq 1-\mu$, and so $u\in (1-\mu)B_{\er^d} \subset U_{\er^d} \subset A$.

\pfpar
For the rest of the proof we shall assume that $\mu=0$. Similarly as in the proof of Lemma~\ref{T:Xgen} we prove that there is no loss of generality in assuming that $\mu_k=0$ for each $k$. In other words, we can assume that for all $j,k\in \en$ we have $\alpha_k^j=0$ or $\beta_k^j=0$. 

\pfpar
Recalling the notation in \eqref{E:formallyinf}, we finally distinguish two cases depending on the value of 
$$ \lambda:= \sum_{j=1}^\infty \big( \alpha^j + \beta^j \big).$$

\pfpar
Again, $\lambda\in [0,1]$ and we first resolve the case $\lambda =1$: We shall observe that the limit and the series in the definition of $u$ commute, that is:
\begin{equation}\label{E:ell03}
u=\lim_{k\to\infty}\sum_{j=1}^\infty \big(\alpha_k^j f_k^j - \beta_k^j g_k^j\big)=\sum_{j=1}^\infty \big(\alpha^j f^j - \beta^j g^j \big).
\end{equation}
Indeed, let $\varepsilon>0$ be given and let us find $j_0\in \en $ such that $\sum_{j=j_0+1}^\infty \big(\alpha^j + \beta^j \big)<\frac{\varepsilon}{4}$, and $k_0\in \en$ such that for each $k\geq k_0$ we have $\big\|\sum_{j=1}^{j_0}\big(\alpha_k^j f_k^j -\alpha^j f^j - \big(\beta_k^j g_k^j - \beta^j g^j\big)\big)\big\| < \frac{\varepsilon}{4}$ and $\big| \sum_{j=1}^{j_0}\big(\alpha_k^j + \beta_k^j \big) - \sum_{j=1}^{j_0}\big(\alpha^j + \beta^j \big) \big|<\frac{\varepsilon}{4}$. By the same calculation as in the proof of \ref{T:Xgen} we obtain that for each $k\geq k_0$,
$$ \Big\|\sum_{j=1}^\infty \big(\alpha_k^j f_k^j - \beta_k^j g_k^j \big)- \sum_{j=1}^\infty \big(\alpha^j f^j - \beta^j g^j\big) \Big\| < \varepsilon. $$
This proves \eqref{E:ell03}, so $u\in \sconv(A)=\conv(A)=A$ by Proposition~\ref{P:sconvisconv} and the convexity of $A$. 

\pfpar
Aiming for a contradiction, suppose that $\lambda\in [0,1)$ and set
\begin{equation*}
\begin{aligned}
x_\lambda &:=\sum_{j=1}^\infty \big(\alpha^j - \beta^j \big) e^j \quad \text{and} \\
x_{(1-\lambda)} &:= x- x_\lambda = \lim_{k\to\infty} \sum_{j=1}^\infty \big(\alpha_k^j-\alpha^j - \big(\beta_k^j - \beta^j \big) \big) e^j. 	
\end{aligned}
\end{equation*}
Note that the last limit must exist as it is the difference of an existing limit and $x_\lambda$. Define
$$ \tilde{\gamma}_k^j:= \big( \alpha_k^j - \alpha^j \big)^+ \quad\text{and}\quad \tilde{\delta}_k^j:= \big( \beta_k^j - \beta^j \big)^+,$$
and we now claim that 
$$ x_{(1-\lambda)}=\lim_{k\to\infty} \sum_{j=1}^\infty \big(\tilde{\gamma}_k^j - \tilde{\delta}_k^j \big)e^j.$$

\pfpar
By Lemma~\ref{L:lemma3} it suffices to prove that 
$$\lim_{k\to\infty}\big\| \big(\alpha_k^j-\alpha^j\big)_{j=1}^\infty - \big( \tilde{\gamma}_k^j \big)_{j=1}^\infty \big\|_1 = 0 \quad\text{and}\quad
\lim_{k\to\infty}\big\| \big(\beta_k^j - \beta^j \big)_{j=1}^\infty - \big(\tilde{\delta}_k^j \big)_{j=1}^\infty\big\|_1 = 0.$$
This can be done in the same way as in the proof of \ref{T:Xgen}.

\pfpar
Now, set $\sigma_k:=\sum_{j=1}^\infty \big( \tilde{\gamma}_k^j + \tilde{\delta}_k^j\big)$	 and observe that $\lim_{k\to\infty}\sigma_k = 1-\lambda$. Further, define
$$\gamma_k^j:= \frac{\tilde{\gamma}_k^j}{\sigma_k}\quad\text{and}\quad\delta_k^j:= \frac{\tilde{\delta}_k^j}{\sigma_k};$$
then we have:
\begin{enumerate}[(i)]
\item $x_{(1-\lambda)} =(1-\lambda)\lim_{k\to\infty} \sum_{j=1}^\infty\big(\gamma_k^j - \delta_k^j \big)e^j$;
\item for each $k\in\en$, $\sum_{j=1}^\infty \big( \gamma^j_k + \delta^j_k \big) = 1$;
\item for all $j,k\in\en$, either $\gamma_k^j=0$ or $\delta_k^j=0$;
\item and for each $j\in\en$, $\lim_{k\to\infty} \gamma_k^j=0 \text{ and } \lim_{k\to\infty} \delta_k^j=0$.
\end{enumerate}

\pfparref, \pfparref, \pfpar
Now, on one hand we can readily see from (i), (ii) and (iii) that
$$\|x_{(1-\lambda)}\| = (1-\lambda)\lim_{k\to\infty}\sum_{j=1}^\infty \big(\gamma_k^j + \delta_k^j \big) = (1-\lambda)\lim_{k\to \infty}1 = 1-\lambda > 0.$$
On the other hand, (i) and (iv) yield that $x_{(1-\lambda)}=0$. Indeed, (i) says that $x_{(1-\lambda)}$ is the norm-limit of a certain sequence which by (iv) converges pointwise to $0$. 
\end{proof}

\begin{remark} 
\begin{enumerate}[(a)]
\item Let $X$ be a Banach space, $Y\subset X$ be a closed subspace and $q:X\to X/Y$ be the quotient mapping.
Since any functional $y^*\in Y^\bot$ can be expressed as $y^*=\varphi\circ q$ for some $\varphi:=(X/Y)^*$, one can notice that in the proof of Lemma~\ref{L:proxchar} (ii) we actually show the following equivalence for each $y^*\in Y^\bot$:
$$ y^*\in \NA(X) \Longleftrightarrow \text{the corresponding }\varphi\in(X/Y)^* \text{ attains its norm on } q(B_X). $$

Consider the situation when $X=\ell_1$ and $Y$ has codimension $d$ in $X$; then $X/Y\cong \er^d$. The set $A:=U_{(\er^d,\|\cdot\|_2)}$ satisfies the assumptions of Proposition~\ref{P:ellonecase}, so there is an equivalent norm $\vertiii{\cdot}$ on $\ell_1$ such that $q(B_{(\ell_1,\vertiii{\cdot})})=A$. From the above equivalence we immediately obtain that $Y^\bot\subset (X^*\setminus \NA(\ell_1,\vertiii{\cdot}))\cup\{0\}$. 

In other words, given any finite-dimensional subspace $Z\subset (\ell_1)^*$, we can find a renorming such that $Z\setminus\{0\}$ is contained in non-norm-attaining functionals.  Note that a result of F.J.~Garc\'ia-Pacheco \cite[Theorem 1.2]{Gar08} implies that $\ell_1^*\setminus\NA(\ell_1)$ is, in fact, spaceable.

It is also easy to see that, in the new norm, $Y$ is \emph{antiproximinal} (i.e. no point $x\notin Y$ has in $Y$ a closest point). Indeed, the proof of Lemma~\ref{L:proxchar}~\emph{(i)} also shows that 
$$ X \text{ is antiproximinal } \Longleftrightarrow q(B_X)=U_{X/Y}.$$

Likewise, given a closed subspace $Y$ of finite codimension in a non-reflexive Banach space $X$, there exists an equivalent norm on $X$ such that $Y$ is proximinal in $X$. This follows from Lemma~\ref{T:Xgen} as the set $A:=B_{X/Y}$ satisfies the assumptions.
\item The assumption that $Y$ is finite-codimensional is very important for our approach to both Lemma~\ref{T:Xgen} and Proposition~\ref{P:ellonecase}. In the proof of the former, we use this assumption to see that $Y$ is complemented, and therefore non-reflexive as its complement is finite-dimensional; this argument is not necessary in the latter case as $\ell_1$ contains no infinite-dimensional complemented reflexive subspaces---see \cite[Section 13.7]{FHHMZ}. The fact that $Y$ is non-reflexive is needed as any reflexive space is proximinal in any superspace (see \cite{Singer}; the converse is also true---\cite{Pollul} or \cite{Sin73}). In both cases we rely on Proposition~\ref{P:sconvisconv}. 
\item It is natural to ask whether the assumption that $A$ is $F_\sigma$ in Lemma~\ref{T:Xgen} and Proposition~\ref{P:ellonecase} can be relaxed to higher Borel classes or even analytic sets. Related questions have already been studied: R.~Kaufman proves in \cite{Kau91} that any non-reflexive Banach space admits an equivalent norm such that the set of norm-attaining functionals is not Borel; in \cite{Kau00} he improves this result showing that such a renorming can be made even Fr\'echet smooth. Following Kaufman and using his deep abstract lemmas, O.~Kurka proves in his paper \cite{Kur11} that given any separable non-reflexive Banach space $X$ and an ordinal $\alpha<\omega_1$, there exists an equivalent strictly convex norm on $X$ such that the corresponding set of norm-attaining functionals is not of the (additive) Borel class $\alpha$ (it is observed in \cite{Kau91} that when $X$ is strictly convex, $NA(X)$ is Borel). 

It seems likely that Kurka's method could be (at least in some cases) used to prove a version of Lemma~\ref{T:Xgen} with weaker assumptions on the descriptive quality of $A$.
\end{enumerate}
\end{remark}

\begin{ack}
I would like to thank Bernardo Cascales (who suggested the topic of this article during my visit in Murcia), Richard Aron and Mat\'ias Raja for their helpful comments, moral support and interest in my work. Most of all, I am very grateful to Ond\v{r}ej Kalenda for many fruitful discussions and careful reading of the manuscript which resulted in further helpful suggestions. 
\end{ack}

\bibliographystyle{plain}
\bibliography{references}

\begin{thebibliography}{10}

\bibitem{AcAiArGa07}
Mar{\'{\i}}a~D. Acosta, Antonio Aizpuru, Richard~M. Aron, and Francisco~J.
  Garc{\'{\i}}a-Pacheco.
\newblock Functionals that do not attain their norm.
\newblock {\em Bull. Belg. Math. Soc. Simon Stevin}, 14(3):407--418, 2007.

\bibitem{BanGod06}
Pradipta Bandyopadhyay and Gilles Godefroy.
\newblock Linear structures in the set of norm-attaining functionals on a
  {B}anach space.
\newblock {\em J. Convex Anal.}, 13(3-4):489--497, 2006.

\bibitem{BePeSe14}
Luis Bernal-Gonz{\'a}lez, Daniel Pellegrino, and Juan~B. Seoane-Sep{\'u}lveda.
\newblock Linear subsets of nonlinear sets in topological vector spaces.
\newblock {\em Bull. Amer. Math. Soc. (N.S.)}, 51(1):71--130, 2014.

\bibitem{BiPh61}
Errett Bishop and Robert~R. Phelps.
\newblock A proof that every {B}anach space is subreflexive.
\newblock {\em Bull. Amer. Math. Soc.}, 67:97--98, 1961.

\bibitem{Day73}
Mahlon~M. Day.
\newblock {\em Normed linear spaces}.
\newblock Springer-Verlag, New York-Heidelberg, third edition, 1973.
\newblock Ergebnisse der Mathematik und ihrer Grenzgebiete, Band 21.

\bibitem{FHHMZ}
Mari{\'a}n Fabian, Petr Habala, Petr H{\'a}jek, Vicente Montesinos, and
  V{\'a}clav Zizler.
\newblock {\em Banach space theory}.
\newblock CMS Books in Mathematics/Ouvrages de Math\'ematiques de la SMC.
  Springer, New York, 2011.
\newblock The basis for linear and nonlinear analysis.

\bibitem{Gar08}
Francisco~J. Garc{\'{\i}}a-Pacheco.
\newblock Vector subspaces of the set of non-norm-attaining functionals.
\newblock {\em Bull. Aust. Math. Soc.}, 77(3):425--432, 2008.

\bibitem{Gar63}
Aleksandr~L. Garkavi.
\newblock On best approximation by the elements of infinite-dimensional
  subspaces of a certain class.
\newblock {\em Mat. Sb. (N.S.)}, 62 (104):104--120, 1963.

\bibitem{GodBSc0}
Gilles Godefroy.
\newblock The {B}anach space {$c_0$}.
\newblock {\em Extracta Math.}, 16(1):1--25, 2001.

\bibitem{GodInd99}
Gilles Godefroy and V.~Indumathi.
\newblock Proximinality in subspaces of {$c_0$}.
\newblock {\em J. Approx. Theory}, 101(2):175--181, 1999.

\bibitem{GodInd13}
Gilles Godefroy and V.~Indumathi.
\newblock Proximinality and renormings: some new examples.
\newblock {\em J. Approx. Theory}, 176:118--128, 2013.

\bibitem{Ind82}
V.~Indumathi.
\newblock Proximinal subspaces of finite codimension in general normed linear
  spaces.
\newblock {\em Proc. London Math. Soc. (3)}, 45(3):435--455, 1982.

\bibitem{Ind87}
V.~Indumathi.
\newblock On transitivity of proximinality.
\newblock {\em J. Approx. Theory}, 49(2):130--143, 1987.

\bibitem{Jam63}
Robert~C. James.
\newblock Characterizations of reflexivity.
\newblock {\em Studia Math.}, 23:205--216, 1963/1964.

\bibitem{Kau91}
Robert~P. Kaufman.
\newblock Topics on analytic sets.
\newblock {\em Fund. Math.}, 139(3):215--229, 1991.

\bibitem{Kau00}
Robert~P. Kaufman.
\newblock On smooth norms and analytic sets.
\newblock {\em Israel J. Math.}, 116:21--27, 2000.

\bibitem{Klee59}
Victor Klee.
\newblock Some new results on smoothness and rotundity in normed linear spaces.
\newblock {\em Math. Ann.}, 139:51--63 (1959), 1959.

\bibitem{Kur11}
Ond{\v{r}}ej Kurka.
\newblock Structure of the set of norm-attaining functionals on strictly convex
  spaces.
\newblock {\em Canad. Math. Bull.}, 54(2):302--310, 2011.

\bibitem{Pollul}
Walter Pollul.
\newblock {\em Reflexivit\"at und {E}xistenz-{T}eilr\"aume in der linearen
  {A}pproximationstheorie}.
\newblock Gesellschaft f\"ur Mathematik und Datenverarbeitung, Bonn, 1972.
\newblock Gesellschaft f{\"u}r Mathematik und Datenverarbeitung, Bonn, Ber. No.
  53.

\bibitem{Read}
Charles~J. Read.
\newblock Banach spaces with no proximinal subspaces of codimension 2.
\newblock arXiv:1307.7958.

\bibitem{Singer}
Ivan Singer.
\newblock {\em Best approximation in normed linear spaces by elements of linear
  subspaces}.
\newblock Translated from the Romanian by Radu Georgescu. Die Grundlehren der
  mathematischen Wissenschaften, Band 171. Publishing House of the Academy of
  the Socialist Republic of Romania, Bucharest; Springer-Verlag, New
  York-Berlin, 1970.

\bibitem{Sin73}
Ivan Singer.
\newblock On normed linear spaces which are proximinal in every superspace.
\newblock {\em J. Approximation Theory}, 7:399--402, 1973.

\end{thebibliography}

\end{document}